\def\a{\alpha}
\def\b{\beta}
\def\c{\gamma}
\def\G{\Gamma}
\def\lg{\langle}
\def\rg{\rangle}
\def\ra{\rightarrow}
\def\.{\cdot}
\def\O{\Omega}
\def\nb{\nabla}
\def\l{\lambda}
\def\t{\tau}
\def\beq{\begin{equation}}
\def\eeq{\end{equation}}
\def\bi{\begin{enumerate}}
\def\ei{\end{enumerate}}
\def\bea{\begin{eqnarray}}
\def\eea{\end{eqnarray}}
\def\beas{\begin{eqnarray*}}
\def\eeas{\end{eqnarray*}}
\def\ba{\begin{array}}
\def\ea{\end{array}}
\def\x{\times}
\def\f{\varphi}
\def\o{\omega}
\def\e{\varepsilon}
\def\L{\Lambda}
\def\k{\kappa}
\def\s{\sigma}
\def\r{\end{proof}}
\def\ot{\otimes}
\def\ug{\mathfrak{u}}
\def\g{\mathfrak{g}}
\def\G{\Gamma}
\def\LRA{\Leftrightarrow}
\def\RA{\Rightarrow}
\def\cp{\dot\gamma}
\def \R{\mathbb{R}}
\def \Z{\mathbb{Z}}
\def \C{\mathbb{C}}
\def \T{\mathbb{T}}
\def \H{\mathbf{H}}
\def\End{{\rm End}}
\def\tl{\tilde}
\def\dl{{\delta}}
\def\we{\wedge}
\def\d{{\partial}}
\def\db{\overline\partial}
\def\be{\begin{equation}}
\def\ee{\end{equation}}
\def\tr{\mathrm{tr}}
\def\tsl{\widetilde{SL}(2,\mathbb{R})}
\def\sm{\smallsetminus}
\newtheorem{ede}{Definition}[section]
\newtheorem{cor}[ede]{Corollary}
\newtheorem{prop}[ede]{Proposition}
\newtheorem{lem}[ede]{Lemma}
\newtheorem{thm}[ede]{Theorem}
\newtheorem{lema}[ede]{Lemma}
\newtheorem{rem}[ede]{Remark}
\newtheorem{defi}[ede]{Definition}
\theoremstyle{definition}
\newtheorem{definition}[ede]{Remark}
\def\obs{\begin{definition}}
\def\eobs{\end{definition}}
\def\J{\mathcal{J}}
\def\op{\oplus}
\title{On the metric structure of some non-K\"ahler complex threefolds}
\author{Florin Alexandru Belgun}
\address{Florin A. Belgun\\ Fachbereich Mathematik\\ Universit\"at
  Hamburg\\Bundessstr. 55, Zi. 214\\ 20146 Hamburg}
 \email{florin.belgun@math.uni-hamburg.de}
\begin{document}

\begin{abstract}
We introduce a class of hermitian metrics with {\em Lee potential},
that generalize the notion of l.c.K. metrics with potential introduced
in \cite{ov} and show that in the classical examples of Calabi and Eckmann of complex structures
on $S^{2p+1}\x S^{2q+1}$, the corresponding hermitian metrics are of this
type. These examples satisfy, actually, a stronger
differential condition, that we call {\em generalized Calabi-Eckmann},
condition that is satisfied also by the {\em Vaisman} metrics
(previously also refered to as {\em generalized Hopf
  manifolds}). This condition means that, in addition to being with Lee
potential, the torsion of the {\em
  characteristic} (or Bismut) connection is parallel. We give a local
geometric characterization of these generalized Calabi-Eckmann
metrics, and, in the case of a compact threefold, we give detailed
informations about their global structure. More precisely, the cases
which can not be reduced to Vaisman structures can be obtained by
deformation of locally homogenous hermitian manifolds that can be
described explicitly.

 \end{abstract}




\vspace*{-1cm}
\maketitle

\section{Introduction}
The non-K\"ahlerianity of a hermitian metric is measured by the
exterior  differential of the K\"ahler form, which decomposes as
\be\label{lee} d\o=-2\theta\we\o+\Omega_0,\ee
where $\theta\in\L^1M$ is the {\em Lee form} of $\o$, and $\Omega_0$
the {\em trace-free} part of $d\o$. In complex dimensions greater than
$2$, the vanishing of $\Omega_0$ implies that $\theta$ is closed or,
equivalently, that the hermitian metric is {\em locally conformally
  K\"ahler} (l.c.K.), and this kind of metrics have been studied
for a long time by many specialists \cite{vais}, \cite{do},
\cite{lck}, \cite{ov}, 
 to name just a few.

In fact, the first examples of non-K\"ahler compact complex manifolds were
given by the Hopf manifolds $\C^n\sm\{0\}$ divided by a nontrivial
linear diagonal contraction, which are of this hermitian
type. Moreover, the standard flat metric on $\C^n$
induces, by renormalization, a {\em Vaisman} (formerly: {\em
  generalized Hopf}) metric on the quotient,
i.e. it is not only l.c.K., but its Lee form is not just closed, but
parallel as well. This lead to the former name of the structure, name which was
proven inappropriate in \cite{lck}, where examples of Hopf surfaces not
admitting such a Vaisman metric were given. Also in \cite{lck}, the stability
to small deformations of the class of Vaisman, or even l.c.K.
manifolds was disproven.

However, in \cite{ov}, Ornea and Verbitsky introduced the concept of
{\em l.c.K. metrics with potential}, or l.c.K.p., which form a subclass of
l.c.K. metrics containing the Vaisman structures, subclass which turns
out to be stable to deformations \cite{ov}. This class of metrics can
be characterized by the fact that its K\"ahler form is determined by
the $1$-form $\theta$ through the following relation: 
\be\label{lckp} \o=c\left( \theta\we
  J\theta+\frac{1}{2}d(J\theta)\right),\ \mbox{
  and } d\theta=0,\ee 
where $\theta$ is the (closed) Lee form and $c>0$ is a constant, hence
 $\o$ is l.c.K. Here, and throughout the paper, $\o:=g(J\cdot,\cdot)$
is the K\"ahler form of the metric $g$, $J$ is the complex structure,
and $J\theta:=-\theta\circ J$.  

If we consider the exterior derivative of (\ref{lckp}), and we no longer assume that the (real) Lee form $\theta$ is closed,
but only that $\db\theta^{(0,1)}=0$ (equivalent to the fact that
$d\theta$ or $d(J\theta)$ is a $(1,1)$--form),  then we obtain 
\be\label{lp}d\o=c\left(-\theta\we d(J\theta)+d\theta\we
  (J\theta)\right)\ \mbox{ and } \db\theta^{(0,1)}=0,\ee
and we call the
corresponding metric a {\em metric with Lee potential} (in short
LP). Note that, if (\ref{lp}) is satisfied for $\theta$, then a similar relation
holds (with some constant $c'$) for any non-zero linear combination of $\theta$
and $J\theta$. In (\ref{lp}), therefore, the Lee form does not play a
priviledged role, as in (\ref{lckp}), but only the ``complex line''
determined by $\theta$ occurs as the set of potentials for $d\o$.  
\smallskip

Note that from a topological point of view, the {\em
  non-K\"ahlerianity} of an l.c.K. metric (or, more 
particularly, l.c.K.p. or even Vaisman) lies in the first
cohomology group of the manifold. More precisely, any such metric on a
simply-connected manifold is automatically K\"ahler. 

On the other hand, in 1953, Calabi and Eckmann gave an example of
compact, non-K\"ahler 
complex homogeneous structure on a product of odd-dimensional
spheres, \cite{ce}. This generalizes the classical example of Hopf manifolds, in
which case one of the factors is a circle, but, if the dimension of
both factors is at least 3, it produces compact, simply-connected
examples of non-K\"ahler manifolds. 

The first remark that we make in this paper is that the Calabi-Eckmann
manifolds, with their standard product metric, satisfy the equations
(\ref{lp}), and are, hence, metrics with Lee potential. 
On the other hand, they generalize the Vaisman structures in the
following way:
\begin{defi}\label{gce} An LP hermitian metric that has a non-zero Lee
  form and such that the torsion of its {\em characteristic (or Bismut)
  connection} is parallel is called {\em generalized Calabi-Eckmann}
(or {\em GCE}).\end{defi}

Recall that the characteristic connection of a hermitian metric is unique
with the property that it preserves the hermitian structure and that its
torsion tensor is totally skew-symmetric (a $3$-form $T$, of type
$(2,1)+(1,2)$). By contraction with the K\"ahler form we obtain $J$
times the Lee form which is, in our GCE case, parallel {\em with
  respect to the characteristic connection}. Only if the Lee form is
closed is this form parallel also with respect to the Levi-Civita
connection (hence the Vaisman manifolds form a subclass of GCE manifolds). 

We obtain thus that the GCE manifolds form a subclass of LP manifolds,
but, in the case of complex threefolds, they can be deformed to become
either Vaisman manifolds, local products of a Vasiman surface with a Riemann
surface, or local products of {\em Sasakian} manifolds, see section
\ref{local}).



The latter (generic) case will be described in detail; we show, in
particular, that such a local Sasakian product is always a deformation
of a locally homogeneous one, and the underlying manifolds are
$3$--dimensional analogues to the non-K\"ahler elliptic
surfaces and Hopf surfaces, Theorem
\ref{th1}. 

Because the GCE structures are related to Sasakian geometry, as
much as the Vaisman structures are, it is not surprising that the
results in this article are somewhat similar to the ones in
\cite{lck}. However, the details are only indirectly related, and -- more
important --, we do not have a classification of compact complex
threefolds to use, as we did for surfaces \cite{lck}. Therefore,
although the fundamental group of a GCE threefold is not fully
understood, most topological and hermitian properties of these complex
manifolds can be described in detail.

The paper is organized as follows: In the first section we recall some
facts about hermitian geometry and the characteristic connection. In
the second, we study the case of parallel characteristic torsion and
obtain a local geometric characterization of GCE manifolds in all
dimensions, extending the results of \cite{afs}, \cite{sch}. 
In the third, we recall elements of  Sasakian geometry and facts
about l.c.K.p. and Vaisman geometry. Finally, we prove Theorem
\ref{th1} by examining the various cases that can occur on a compact
GCE threefold.

{\bf Acknowledgements: } The author is grateful to the SFB 676,
member of which he was during the research presented here, and in
particular to Vicente Cort\'es for many fruitful discussions.

\section{Hermitian geometry: basics and the characteristic connection}
Let $(M,J)$ be a complex manifold, i.e. $J$ is an endomorphism of
$TM$ of square $-I$ and {\em integrable}, i.e. its Nijenhuis tensor
$N^J\in\L^2M\ot TM$, defined by
\be\label{nij}4N^J(X,Y):=[JX,JY]-J[JX,Y]-J[X,JY]-[X,Y]\ee
vanishes identically. Let $g$ be a hermitian metric,
i.e. $g$ is a (positive definite) Riemannian metric, such that
$g(J\cdot,J\cdot)=g$. In fact, a hermitian metric is a reduction to
$U(m)$ (if $\dim_\C M=m$) of the structure group of $TM$. Let $\o:=g(J\cdot,\cdot)$ be the K\"ahler form
of the hermitian metric $g$. It is a $(1,1)$-form, and the space of
$k$-forms on $M$ decomposes in the following $U(m)$-irreducible
components:
$$\L^kM\ot\C=\displaystyle{\op}_{p=0}^k \L^{p,k-p}M, \mbox { and }\\
\L^{p,k-p}M=\L^{p,k-p}_0 M\op \L^{p-1,k-p-1}M,\ \forall
p=1,\dots,k-1,$$
where $\L^{p,q}_0M$, for $p,q>0$, is the kernel of the contraction with $\o$,
i.e. of the map from $\L^{p,q}M$ to $\L^{p-1,q-1}M$ defined by
$$\a\longmapsto \sum_{i=1}^n\a(e_i,Je_i,\dots),$$
where $\{e_i\}$ is a hermitian-orthogonal basis of $TM$.

Note that the real part of a $(p,q)$-form is equally the real part of
its conjugate, which is a $(q,p)$-form, hence we denote by
$\L^{(p,q)+(q,p)}M$ the real part of the sum of the vector spaces
$\L^{p,q}M$ and $\L^{q,p}M$. For $p=q$, our convention is to use {\em
  real} forms, so we denote by $\L^{p,p}M$ the space of {\em
  real} $(p,p)$-forms.

The operator $J\in\End(TM)$ can be considered at the same time as an element in
the Lie group $U(n)$ and in its Lie algebra $\ug(n)$. We denote by
$J.\a$ the Lie algebra action on $J$ on the tensor $\a$, and by $\J\a$
the Lie group action of $J$ on the same tensor. If $\a$ is a $k$-form,
then 
$$(\J\a)(X_1,\dots,X_k)=(-1)^k\a(JX_1,\dots,JX_k),$$
$$(J.\a)(X_1,\dots,X_k)=-\sum_{i=1}^k\a(X_1,\dots,JX_i,\dots,X_k).$$
We have then
\be\ba{lllll}\a\in\L^1M&\RA&\J\a=J.a&&\\
\a\in\L^{1,1}M&\LRA&\J\a=\a&\LRA&J.\a=0\\
\a\in\L^{2,0+0,2}M&\LRA&\J\a=-\a&\LRA&J.\a\in\L^{2,0+0,2}M\\
\a\in\L^{2,1+1,2}M&\LRA&\J\a=J.\a&&\ea\ee

We are next interested in the Hodge $*$ operator and its relations to
$J$. Recall:
\begin{defi} Let $(M,g)$ be an oriented Riemannian manifold and let
  $v_g\in\L^nM$ be its canonical volume form and denote by
  $\lg,.,.\rg$ the induced scalar product on any tensor space of $M$. The {\em Hoge star operator} $*=*_g:\l^pM\ra\L^{n-p}M$ is chracterized by the property
$$\forall\beta\in \L^{n-p}M,\ \lg*\a,\b\rg v_g=\a\we\b.$$
\end{defi}
Equivalently, if $e_1,\dots,e_n$ is a $g$--orthonormal basis of $TM$, 
\be\label{star} *(e_1\we\dots\we e_p)=e_{p+1}\we\dots\we e_n.\ee
On an almost hermitian manifold, we can choose the basis to be adapted
to the complex structure, i.e., $Je_{2i-i}=e_{2i}$, $\forall
i=1,\dots,m$, and we have
\begin{prop}\label{ide} On an almost hermitian manifold with K\"ahler
  form $\o$, we have the following identities:
\bea v_g&=&\frac{\o^m}{m!},\\
*\a&=&J.\a\we\frac{\o^{m-1}}{(m-1)!}\ \forall\a\in\L^1M,\\
*\frac{\o^k}{k!}&=&\frac{\o^{m-k}}{(m-k)!},\\
*\a&=&-\a\we\frac{\o^{m-2}}{(m-2)!},\ \forall\a\in\L^{1,1}_0M,\\
*\a&=&\a\we\frac{\o^{m-2}}{(m-2)!},\ \forall\a\in\L^{2,0+0,2}M,\\
*(\a\we\o)&=&J.\a\we\frac{\o^{m-2}}{(m-2)!}\ \forall\a\in\L^1M,\\
*\a&=&\J\a\we\frac{\o^{m-3}}{(m-3)!}\ \forall\a\in\L^{2,1+1,2}_0M.\eea
\end{prop}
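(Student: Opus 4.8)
The plan is to verify each identity by evaluating both sides on a suitable test element and using the characterization \eqref{star} of the Hodge star on a $J$-adapted orthonormal basis $e_1,Je_1,\dots,e_m,Je_m$. Write $f_i=e_{2i-1}$, $g_i=e_{2i}=Jf_i$, so that $\o=\sum_i f_i\we g_i$ and $v_g=f_1\we g_1\we\cdots\we f_m\we g_m$. The first identity $v_g=\o^m/m!$ is the standard computation expanding $\o^m$ and noting that only the "diagonal" terms $f_1\we g_1\we\cdots$ survive (each appearing $m!$ times, with sign $+1$ since the $f_i\we g_i$ blocks commute). The third identity $*(\o^k/k!)=\o^{m-k}/(m-k)!$ follows by testing against $\o^{m-k}/(m-k)!$: one gets $\lg \o^k/k!,\o^k/k!\rg\,v_g$ on one side and $\o^m/k!(m-k)!$ on the other, so it reduces to checking $\lg\o^k,\o^k\rg = k!\,\binom{m}{k}\,k! /\text{(normalization)}$, i.e. a combinatorial identity for the norm of $\o^k$; alternatively test against an arbitrary decomposable $(m-k)$-form.

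For the one-form identities it suffices, by linearity and $U(m)$-equivariance, to take $\a=f_1$ (so $J.\a = Jf_1 = g_1$, using that on $1$-forms $J.\a = \J\a = -\a\circ J$). Then $*f_1 = *e_1 = e_2\we\cdots\we e_{2m} = g_1\we f_2\we g_2\we\cdots\we f_m\we g_m$, while $g_1\we \o^{m-1}/(m-1)! = g_1\we(\sum_{i\ge2}f_i\we g_i)^{m-1}/(m-1)! = g_1\we f_2\we g_2\we\cdots\we f_m\we g_m$ by the same diagonal-term count as for $v_g$; this gives the second identity. For the sixth, $*(\a\we\o)=*(f_1\we\sum_i f_i\we g_i)=*(f_1\we g_1\we(\text{rest}))$... more carefully $f_1\we\o = f_1\we\sum_{i\ge2}f_i\we g_i$ plus a term $f_1\we f_1\we g_1=0$, wait — $f_1\we(f_1\we g_1)=0$, so $f_1\we\o = f_1\we\sum_{i\ge 2}f_i\we g_i$, a sum of $3$-forms; applying \eqref{star} to each decomposable piece and summing, one checks the result equals $g_1\we\o^{m-2}/(m-2)! = J.\a\we\o^{m-2}/(m-2)!$. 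The two $\L^2$ identities are handled the same way: for $\L^{1,1}_0M$ take the model element $\a = f_1\we g_1 - f_2\we g_2$ (trace-free), compute $*\a$ directly from \eqref{star} on each of the two terms, and compare with $-\a\we\o^{m-2}/(m-2)!$, where the cross-terms in $\a\we\o^{m-2}$ conspire to produce the right answer and the sign $-1$ emerges because $*(f_1\we g_1) = f_2\we g_2\we\cdots$ has the "wrong" orientation relative to $\o^{m-2}$ built from the remaining indices. For $\L^{2,0+0,2}M$ take $\a = f_1\we f_2 - g_1\we g_2$ (a real $(2,0)+(0,2)$ form); here $*(f_1\we f_2) = *(e_1\we e_3) = e_2\we e_4\we e_5\we\cdots = g_1\we g_2\we f_3\we g_3\we\cdots$, and one finds the sign works out to $+1$, matching $*\a = \a\we\o^{m-2}/(m-2)!$. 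Finally, the last identity on $\L^{2,1+1,2}_0M$: use that on such forms $\J\a = J.\a$ (from the table \eqref{...} of $J$-type relations) and a model trace-free element such as $\a = f_1\we g_1\we f_2 - f_1\we g_1\we g_2$ or better $\a=(f_1\we f_2 - g_1\we g_2)\we f_3 + (\dots)\we g_3$ arranged to be in the kernel of contraction with $\o$; compute $*\a$ via \eqref{star} and compare with $\J\a\we\o^{m-3}/(m-3)!$.

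The main obstacle is bookkeeping the signs and combinatorial multiplicities uniformly: each identity hinges on counting how many terms of $\o^{m-k}$ pair nontrivially (via wedge) with the given test form, and tracking the permutation sign needed to reorder $e_{j_1}\we\cdots$ into the standard order $e_1\we\cdots\we e_{2m}$ that defines the volume form. A clean way to manage this is to fix once and for all the lemma that for disjoint index sets, $*(e_{i_1}\we\cdots\we e_{i_p}) = \pm\, e_{j_1}\we\cdots\we e_{j_{2m-p}}$ with the sign being the signature of the permutation $(i_1,\dots,i_p,j_1,\dots,j_{2m-p})$, and then to reduce every computation to this lemma plus the observation that $\o^r/r!$ restricted to a set of $2r$ "paired" indices is just their product $f_{k_1}\we g_{k_1}\we\cdots\we f_{k_r}\we g_{k_r}$. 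Once that reduction is in place, each of the seven identities is a short direct check on the model elements above, and $U(m)$-equivariance together with the irreducible decomposition of $\L^\bullet M\ot\C$ recalled earlier guarantees that verification on one model element per isotypic component suffices.
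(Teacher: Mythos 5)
Your proposal follows essentially the same route as the paper: choose a $J$-adapted orthonormal basis, expand $\o^k/k!$ as a sum of products of the elementary $(1,1)$-blocks $\o_i=e_i\we Je_i$, list generators of each irreducible component of $\L^\bullet M$, and verify each identity on those generators via the defining formula for $*$, with $U(m)$-equivariance reducing everything to one check per component. The argument is correct as a whole; only beware that your intermediate sign claim $*(e_1\we e_3)=e_2\we e_4\we\cdots\we e_{2m}$ should be $*(e_1\we e_3)=-\,e_2\we e_4\we\cdots\we e_{2m}$ (the permutation $(1,3,2,4,\dots)$ is odd), though the signs do recombine to give the stated identity on the full trace-free generator.
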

\begin{proof} We choose a hermitian basis
  $e_1,Je_1,\dots,e_m,Je_m$ of $TM$ and use (\ref{star}) to
  compute the image through $*$ of some generators of the corresponding
  form spaces; indeed, we denote $e_i\we J e_i$ by $\o_i$ and have:
$$\o=\sum_{i=1}^m\o_i,\ \o^k=\sum_{1\le i_1\dots i_k\le
  m}\frac{m!}{(m-k)!}\o_{i_1}\we\dots\we\o_{i_k}.$$
On the other hand, the spaces of forms considered in the Proposition
 have the following sets of generators:
\bea \{e_i,J e_i\}_i &\mbox{generate}& \L^1M,\\
\{\o_i-\o_j,e_i\we e_j+Je_i\we Je_j\}_{i\ne j} &\mbox{generate}&
\L^{1,1}_0M,\\ 
\{e_i\we e_j-Je_i\we Je_j\}_{i\ne j}&\mbox{generate}&
\L^{2,0+0,2}M,\eea
and use that the map
$$\L^1M\ot\L^{1,1}M\ra\L^{2,1+1,2}M,\ \theta\ot\a\mapsto\theta\we\a$$
is surjective, in order to get that
$$\{e_i\we(\o_j-\o_k),Je_i\we(\o_j-\o_k),e_i\we e_j\we e_k+e_i\we
Je_j\we Je_k, Je_i\we e_j\we e_k+Je_i\we
Je_j\we Je_k\},$$
with $i,j,k\in\{1,\dots,m\}$ distinct, generate $\L^{2,1+1,2}_0M$.

The claimed identities follow by straightforward computations.
\end{proof}

The {\em Lee form} $\theta$ of $\o$ is the unique $1$-form that
satisfies one of the following equivalent equations:
\be\label{dec}d\o=-2\theta\we\o+\O_0,\ \mbox{with }
\O_0\in\L^{(2,1)+(1,2)}_0,\ \Longleftrightarrow \ 
\theta =-\frac{1}{2(m-1)}J\dl^g\o.\ee
(The normalization factor $-2$ in the first equation is consistent
with our convention for the Sasakian structures -- see below.)
Here, and below, the {\em codifferential} is defined, on the
even-dimensional manifold $M$, by the usual
formula $\dl:=-*d*$.

The following result is classical:
\begin{prop} In the decomposition above, if $\dim_\R M=4$, then
  $\O_0=0$; if $\dim_\R M\ge 6$ and $\O_0=0$, then $\theta$ is closed
  and the metric is l.c.K. (i.e., for each point $x\in M$, there
  exists a K\"ahler metric $g_x$, defined on an open set $U_x$ containing $x$,
  which is conformally equivalent to $g$: $g_x=e^{f}g$, for some
  function $f:U_x\ra\R$.) \end{prop}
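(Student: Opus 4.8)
The statement has two parts. For the first, we work pointwise in complex dimension $2$ ($m=2$): the space $\Lb{(2,1)+(1,2)}_0 M$ is the real part of $\L^{2,1}_0 M \oplus \L^{1,2}_0 M$, and by the decomposition $\L^{2,1}M = \L^{2,1}_0 M \oplus \L^{1,0}M$ (contraction with $\o$), the $(2,1)$-forms on a surface have $\L^{2,1}_0 M = 0$ since $\L^{2,1}_0 M$ would correspond to primitive forms that simply do not exist when $m=2$ — equivalently $\dim_\C \L^{2,1}M = m = 2 = \dim_\C \L^{1,0}M$, so contraction with $\o$ is an isomorphism and the primitive part vanishes. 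Hence $\O_0 \in \Lb{(2,1)+(1,2)}_0 = 0$ automatically, and $d\o = -2\theta\wedge\o$. I would spell this out by the dimension count of the relevant $U(2)$-representations, or alternatively just note that a $3$-form on a $4$-manifold is $*$ of a $1$-form, and the $1$-form $*\O_0$ must lie in the part of $\L^1M$ killed by the map described in Proposition \ref{ide}, forcing $\O_0 = 0$.

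For the second part, suppose $\dim_\R M \ge 6$ and $\O_0 = 0$, so $d\o = -2\theta\wedge\o$. The plan is first to show $d\theta = 0$. Apply $d$ to $d\o = -2\theta\wedge\o$: one gets $0 = -2\,d\theta\wedge\o + 2\theta\wedge d\o = -2\,d\theta\wedge\o + 2\theta\wedge(-2\theta\wedge\o) = -2\,d\theta\wedge\o$ since $\theta\wedge\theta = 0$. Thus $d\theta\wedge\o = 0$. Now $d\theta$ is a $2$-form; the map $\a \mapsto \a\wedge\o$ from $\L^2 M$ to $\L^4 M$ is injective when $m \ge 3$ (this is the Lefschetz-type statement $L: \L^2 \to \L^4$ is injective for $m\ge 3$; equivalently, wedging with $\o^{m-2}$ is, up to sign, the Hodge star on certain components, as recorded in Proposition \ref{ide}). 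Hence $d\theta = 0$.

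Once $\theta$ is closed, local conformal change finishes the argument. Locally write $\theta = df$ for a function $f$ on a simply connected open set $U_x \ni x$, and set $g_x := e^{\lambda f} g$ for a constant $\lambda$ to be chosen. Its K\"ahler form is $\o_x = e^{\lambda f}\o$, so $d\o_x = \lambda e^{\lambda f} df\wedge\o + e^{\lambda f} d\o = e^{\lambda f}(\lambda\,\theta\wedge\o - 2\theta\wedge\o) = (\lambda - 2)e^{\lambda f}\,\theta\wedge\o$, which vanishes for $\lambda = 2$. A hermitian metric whose K\"ahler form is closed is K\"ahler, so $g_x$ is K\"ahler and $g$ is l.c.K.

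The only genuine obstacle is the injectivity of $L:\L^2 M \to \L^4 M$ (wedging with $\o$) for $m \ge 3$, which is exactly where the dimension hypothesis enters; everything else is formal manipulation with $d$ and a conformal rescaling. I would justify that injectivity either by invoking the hard Lefschetz decomposition on the exterior algebra of a hermitian vector space, or directly: decompose $d\theta = d\theta_0 + \beta\wedge\o$ with $d\theta_0$ primitive, use $*(d\theta_0) = -d\theta_0\wedge\o^{m-2}/(m-2)!$ and $*(\beta\wedge\o) = J.\beta\wedge\o^{m-2}/(m-2)!$ from Proposition \ref{ide} together with $d\theta\wedge\o = 0$ to conclude both pieces vanish when $m-2 \ge 1$, i.e. $m \ge 3$.
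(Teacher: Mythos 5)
Your proof is correct and rests on exactly the fact the paper itself invokes, namely the injectivity of $\o\we\cdot:\L^{p,q}M\ra\L^{p+1,q+1}M$ for $p,q<\dim_\C M$ (giving $\L^{2,1}_0M=0$ when $m=2$, and $d\theta\we\o=0\Rightarrow d\theta=0$ when $m\ge3$). The paper leaves the routine steps (differentiating $d\o=-2\theta\we\o$ and the conformal rescaling $g_x=e^{2f}g$) unwritten, and you have simply filled them in correctly.
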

The proof is based on the fact that the wedge product with $\o$, 
$$\o\we\cdot: \L^{(p,q)}M\ra \L^{(p+1,q+1)}$$ 
is injective iff $p,q<\dim_\C M$.

A theorem by Gauduchon \cite{pg} implies that, on a compact
l.c.K. manifold, there is a unique metric (the {\em standard}, or the
{\em Gauduchon} metric), conformally equivalent with the original one,
for which the Lee form is harmonic. 

A special class of Gauduchon metrics consists of the {\em Vaisman
  structures}, for which the Lee form is non-zero and parallel. Formerly called
{\em generalized Hopf manifolds}, the corresponding complex manifolds
admit a non-vanishing holomorphic vector field (whose real part is the
metric dual to the Lee form), which is a strong topological
condition implying the vanishing of the Euler characteristic
$\chi(M)$. Another necessary condition for the existence of a
non-globally conformally K\"ahler l.c.K. metric is a non-zero first de
Rham cohomology vector space, since adding $df$ to the Lee form
corresponds to multiplying the metric with $e^{-2f}$.

In fact, for $M$ a compact complex surface, the first Betti number is
even iff the surface admits a K\"ahler metric. The author classified
the l.c.K. complex surfaces with $\chi(M)=0$, and also the Vaisman
structures on them (when they exist) in \cite{lck}. A conclusion of
this series of results is that neither the class of Vaisman surfaces,
nor the larger class of l.c.K. surfaces is stable by small
deformations. In \cite{ov}, Ornea and Verbitsky introduced an
intermediate class, of {\em l.c.k. metrics with potential}, for which
the K\"ahler form is determined by the (closed) Lee form:
$$\o=c\left(\theta\we J\theta +\frac{1}{2}d (J\theta)\right),\
c>0\mbox{ constant}\mbox{ and } \
d\theta=0.$$

\begin{defi}\label{deflp} A hermitian metric with K\"ahler form $\o$
  and Lee form $\theta$ is a {\em metric with Lee potential} (LP) iff 
$$d\o=c\left(d\theta\we J\theta -\theta\we d (J\theta)\right),\
c>0\mbox{ constant},\ \mbox{ and } \ \db\theta^{(0,1)}=0,$$
where $\theta^{(0,1)}$ is the $(0,1)$-part of the real form $\theta$:
$$\theta^{(0,1)}:=\frac{1}{2}\left(\theta+iJ\theta\right).$$
\end{defi}
Note that the condition $\db\theta^{(0,1)}=0$ is equivalent to 
$d\theta$ (and also $d(J\theta)$) being of type $(1,1)$. 

The LP metrics correspond to a special form of a refined version of
the decomposition (\ref{dec}). Note that, although $\O_0=0$ in
(\ref{dec}) implies that $d\theta=0$, the converse is 
not true. On the other hand, the $\db$-closure of the $(0,1)$-form
$\theta^{(0,1)}$ is a more general fact than the closure of some
linear combitation of $\theta$ and $J\theta$, and the
exactness of $\theta^{(0,1)}$ is equivalent to the vanishing of the
Dolbeault class $[\theta^{(0,1)}]\in H^{(0,1)}M$.

Before showing that the Calabi-Eckmann complex structures on
$S^{2p+1}\x S^{2q+1}$, $p,q>0$ admit a LP metric (the standard product
metric, or some straightforward modification of it), we need to recall
some basic facts of {\em Sasakian geometry}.

\begin{defi} A manifold $(N^{2n+1},g,\xi)$ is {\em Sasakian} iff $g$
  is a Riemannian metric, $\xi$
  is a unit Killing vector field (the {\em Reeb field}) whose
  covariant derivative is a complex structure on $H:=\xi^\perp$, which
  is {\em integrable} in the $CR$ sense.\end{defi}
Recall that a $CR$ manifold is an odd-dimensional manifold $N^{2m+1}$ endowed
with a distribution of hyperplanes $H$ which is a {\em contact
  structure} (i.e., $\eta\we (d\eta)^n\ne 0$ for any $1$--form $\eta$
whose kernel is $H$ -- $\eta$ is then called a {\em contact form}), and
$J:H\ra H$ is a complex structure on $H$ for which $d\eta|_H$ is of type
$(1,1)$ for every contact form $\eta$. The $CR$ structure is
{\em integrable} iff the Nijenhuis tensor $N^J\in \mbox{Hom}\left(\L^2
  H,H\right)$ of $J$, defined by (\ref{nij}), vanishes identically.

It is well-known that the round metric of an odd-dimensional sphere
$S^{2n+1}$ is
Sasakian, and the corresponding Reeb vector field generates the circle
action on $S^{2n+1}$ defined by the of multiplication of an element of
$S^{2n+1}\subset \C^{n+1}$ with a complex number of norm $1$, and the
basic Calabi-Eckmann structure on $S^{2p+1}\x S^{2q+1}$ is very simple
to describe in hermitian terms:

\begin{prop}\label{bas} Let $(N_1,g_1,\xi_1)$ and $(N_2,g_2,\xi_2)$` be two
  Sasakian manifolds of dimensions $2n_1+1$, resp. `$2n_2+1$, and let
  $(H_i,J_i)$ be their $CR$ structures. Then the product metric
  $g:=g_1+g_2$ is hermitian with respect to the following almost
  complex structure, which is actually integrable:
\bea JX&:=&J_iX,\ X\in H_i;\\
J\xi_1&:=&\xi_2.\eea\end{prop}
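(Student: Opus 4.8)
The plan is to verify the two assertions separately: that $J$ as defined is an almost complex structure compatible with $g=g_1+g_2$, and that $J$ is integrable, i.e. $N^J\equiv 0$. For the first part, I would first record the decomposition $T(N_1\times N_2)=H_1\oplus\R\xi_1\oplus H_2\oplus\R\xi_2$ and note that $J$ is defined on the $H_i$ by the $CR$ structures $J_i$, which square to $-I$, and on the rank-two summand $\R\xi_1\oplus\R\xi_2$ by $J\xi_1=\xi_2$, $J\xi_2=-\xi_1$ (the latter forced by $J^2=-I$). Then $J^2=-I$ is immediate. Compatibility with $g$ reduces to checking $g(J\cdot,J\cdot)=g$ on each summand: on $H_i$ this is the hermitian property of the $CR$ structure (since $\xi_i$ is a unit Killing field and $\nabla\xi_i=J_i$ on $H_i$, $g_i$ restricted to $H_i$ is $J_i$-hermitian), and on $\R\xi_1\oplus\R\xi_2$ it follows because $\xi_1,\xi_2$ are unit vectors lying in the mutually orthogonal factors $N_1$, $N_2$. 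The various mixed terms vanish because $J$ preserves each of the four summands (it maps $H_1\to H_1$, $H_2\to H_2$, $\R\xi_1\oplus\R\xi_2$ to itself), which are $g$-orthogonal.

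The substantive part is integrability. I would compute $N^J$ using formula (\ref{nij}), exploiting that $N^J$ is a tensor, so it suffices to evaluate it on pairs of vector fields chosen from a convenient frame, and that on a product $N_1\times N_2$ the Lie bracket of a vector field tangent to the first factor with one tangent to the second vanishes. Decompose any tangent vector into its $H_i$, $\xi_i$ components. There are essentially four cases: (a) both arguments in $H_1$ (or both in $H_2$); (b) one argument in $H_1$, the other in $H_2$; (c) one argument in $H_i$, the other equal to $\xi_1$ or $\xi_2$; (d) the two arguments are $\xi_1$ and $\xi_2$. Case (a) is where the real work is: here $4N^J(X,Y)=[JX,JY]-J[JX,Y]-J[X,JY]-[X,Y]$ with $X,Y$ sections of $H_1$, and one must show this vanishes knowing only that the $CR$ structure $(H_1,J_1)$ is integrable in the $CR$ sense, i.e. $N^{J_1}\in\mbox{Hom}(\L^2 H_1,H_1)$ vanishes. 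The point is that $[J_1X,J_1Y]$, $[J_1X,Y]$, etc., may have components along $\xi_1$, but these $\xi_1$-components, together with the action of $J$ sending $\xi_1$ to $\xi_2$, must cancel; this is exactly the place where one uses that $d\eta_1|_{H_1}$ is $J_1$-invariant (equivalently $\eta_1([J_1X,J_1Y])=\eta_1([X,Y])$, where $\eta_1$ is the contact form dual to $\xi_1$ via $g_1$) — this is the key computation and the main obstacle. Case (d) uses that $\xi_1$ generates an isometry preserving the $CR$ structure of $N_1$ and commutes with $\xi_2$ (they live on different factors), so $[\xi_1,\xi_2]=0$ and the four terms in $N^J(\xi_1,\xi_2)$ telescope to zero. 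Cases (b) and (c) are then handled by the same bookkeeping: brackets across the two factors vanish, and the only surviving terms involve $\xi_i$-components of brackets within one factor, which are controlled by the $CR$ integrability and the Killing property exactly as in case (a).

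The expected main obstacle is precisely the interaction, in case (a), between the $\xi_i$-component of brackets of horizontal vector fields and the ``cross'' action $J\xi_1=\xi_2$: one has to show that the contributions from the contact (non-integrable) part of $H_1$ are killed by the corresponding contributions from $H_2$-side terms — i.e. that the failure of $H_1$ to be involutive is compensated in $N^J$ by the way $J$ mixes the two Reeb directions. Writing $\eta_1,\eta_2$ for the Reeb duals and using $d\eta_i(X,Y)=-\eta_i([X,Y])$ for $X,Y\in H_i$ together with the $(1,1)$-property of $d\eta_i|_{H_i}$, this reduces the vanishing of $N^J$ to the vanishing of the genuine $CR$ Nijenhuis tensors $N^{J_1}$, $N^{J_2}$, which holds by hypothesis. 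Once all four cases are checked, $N^J\equiv 0$, so $J$ is integrable, and combined with the compatibility already established, $(N_1\times N_2,g,J)$ is a hermitian manifold, proving the Proposition.
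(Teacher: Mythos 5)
Your proposal is correct and is exactly the direct verification that the paper leaves to the reader (the paper states only ``The proof is straightforward''): the compatibility check is routine, and the Nijenhuis tensor vanishes case by case, with the only nontrivial case being two horizontal arguments in the same factor, where the $H_i$-component vanishes by $CR$ integrability and the $\xi_1$- and $\xi_2$-components vanish by the $(1,1)$-property of $d\eta_i|_{H_i}$. One small imprecision: in that case nothing is ``compensated by $H_2$-side terms'' --- the $\xi_2$-component of $N^J(X,Y)$ for $X,Y\in H_1$ is $-\eta_1([J_1X,Y])-\eta_1([X,J_1Y])=d\eta_1(J_1X,Y)+d\eta_1(X,J_1Y)$, which is killed internally by the $(1,1)$-property of $d\eta_1$ alone, exactly as your final reduction states.
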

The proof is straightforward.

Recall that the product of a Sasakian manifold with a circle is
Vaisman, and we see that this is the particular case of the above
Proposition, where one of the
Sasakian manifolds is $1$--dimensional. All {\em Sasakian
  automorphisms} (i.e., the isometries that preserve the Reeb field)
of the factors become thus hermitian isometries of $(M,J,g)$, in
particular the standard Calabi-Eckmann complex manifold  $S^{2p+1}\x
S^{2q+1}$ is homogeneous, and its hermitian automorphism group is
$U(p+1)\x U(q+1)$.

In fact, Calabi and Eckmann gave a family of complex structures
on $S^{2p+1}\x S^{2q+1}$, depending on one parameter
$\a\in\H:=\{\a\in\C\ | \ \Im\a>0\}$, by
setting in the Proposition \ref{bas}
$$J_\a\xi_1:=\mbox{Re}(\a)\xi_1+\mbox{Im}(\a)\xi_2$$
instead of $J\xi_1=\xi_2$, and extending it by linearity and such that
$J_\a^2=-I$. For these complex structures (also integrable), the
product metric is not hermitian any more, but a straightforward
modification 
\be\label{ga}\ba{rclll}g_\a(\xi_i,X)&:=&0,&&X\in H_1\op H_2\\
g_\a(X,Y)&:=&g(X,Y),&&X,Y\in H_1\op H_2\\
g_\a(\xi_1,\xi_1)&:=&g(J_\a\xi_1,J_\a\xi_1)&:=&1\\
g_\a(\xi_1,J_\a\xi_1)&:=&0\ea\ee
is, and the hermitian automorphism group doesn't change. 

These more general complex structures, and their associated hermitian
metrics, will be locally characterized in the next section.

\section{Parallel characteristic torsion. Generalized Calabi-Eckmann
  structures}\label{local} 

We intend to (locally) characterize the Calabi-Eckmann complex and
hermitian structures by a
differential-geometric condition on the hermitian manifold
$(M,J,g)$. Recall that, if $(M,J,g)$ is Vaisman, then its Lee form,
being paralel, generates a local product structure (by the
decomposition Theorem of de Rham) which is easily checked to be locally
isomorphic to the construction of Proposition \ref{bas} (for $n_2=0$).

For $m_1,m_2>0$, we do not get any parallel $1$--form and, if $\a\in\H$ is
 generic, there is no parallel distribution in the Calabi-Eckmann
 manifold $(S^{2p+1}\x S^{2q+1},J_\a,g_\a)$ for the {\em Levi-Civita
   connection}.

The idea is then to use another canonical connection on this manifold,
one that respects both the metric and the complex structure: the
{\em characteristic} connection. This notion arises in a more
general context than the one of hermitian geometry:

\begin{defi} Let $M$ be a $n$-dimensional manifold with a pseudo-Riemannian
  $G$-structure on it, i.e. a reduction to $G$ of the structure group
  of the orthogonal frame bundle of some pseudo-Riemannian metric $g$,
  for a given representation $\rho:G\ra O(\R^n,g_0)$. A connection
  $\nb$ on $M$ is called {\em characteristic} iff its torsion
  $T^\nb\in\L^2M\ot TM\stackrel{g}{\simeq} \L^2M\ot \L^1M$ is totally 
  skew-symmetric, i.e., $T^\nb\in\L^3M$.
\end{defi}

Note that, in order to identify $TM$ with its dual, $G$ needs to
preserve some non-degenerate bilinear form $g_0$ on $\R^n$, and the
Lie algebra of $G$ consists -- under this identification -- of
skew-symmetric bilinear forms iff $g_0$ is symmetric. Therefore the
restriction to pseudo-Riemannian $G$-structures is necessary.

Because the map 
$$\nb\longmapsto T^\nb,$$
associating to a pseudo-Riemannian connection its torsion is
injective, the set of characteristic connections, if non-empty, is an
affine space modelled on the space of sections of
$$\T:=T^*M\ot\g(M)\cap \L^2M\ot T^*M\subset \L^3M,$$
where $\g(M)\subset \L^2M$ is (isomorphic to)  the adjoint bundle of
the $G$-structure. Note that the first term on the left hand side is
skew-symmetric in the last 2 arguments and the second term in the
first two. For example, if $\g=\mathfrak{so}(p,q)$, then $\T=\L^3M$,
and if $\g=\mathfrak{u}(p,q)$, then $\T=0$.

Therefore, if an almost pseudo-hermitian manifold admits a
characteristic connection, then it is unique. The torsion of this
connection is then the $(2,1)+(1,2)$--form
$-\J d\o=-J.d\o$ (see the conventions in the previous section). 
Indeed, the differential of the $\nb$--parallel form
$\o$ can be computed in term of the torsion 
$$T(X,Y)=\nb_XY-\nb_YX-[X,Y]$$
of $\nb$ by the formula
\be\label{dom}d\o(X,Y,Z)=\o(T(X,Y),Z)+\o(T(Y,Z),X)+\o(T(Z,X),Y).\ee
But $\o(T(X,Y),Z)=-g(T(X,Y),JZ)=-T(X,Y,JZ)$, and $T$ is
skew-symmetric, hence
$$d\o=J.T=\J T \ \Longleftrightarrow \ T=-\J d\o.$$



Coming back to the Calabi-Eckmann construction, or, more generally, to
the one in Proposition \ref{bas} (possibly with the hermitian
structure $(J_\a,g_\a)$), we will show that the characteristic torsion
is parallel:

\begin{prop}\label{ce-par} 
In the Sasakian product of Proposition \ref{bas}, and also for the
modified hermitian structure $(\ref{ga})$, the hermitian
structure has parallel characteristic torsion.
\end{prop}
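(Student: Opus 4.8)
The plan is to compute the characteristic torsion $T=-\J d\o$ explicitly in terms of the Sasakian data of the two factors, and then show directly that $\nb T = 0$, where $\nb$ is the characteristic connection. First I would recall the Sasakian structure equations for each factor: writing $\eta_i$ for the contact form dual to $\xi_i$ (so $\eta_i = g_i(\xi_i,\cdot)$) and $\Phi_i$ for the fundamental $2$-form of the transverse Kähler structure on $H_i$, one has $d\eta_i = 2\Phi_i$ and $\nb^{g_i}\xi_i = \Phi_i$ (viewed as the endomorphism $J_i$ on $H_i$), together with the integrability of the transverse complex structure. On the product $M$, the Kähler form of $g$ (resp.\ $g_\a$) is $\o = \Phi_1+\Phi_2 - \eta_1\we\eta_2$ (with the appropriate modification involving $\a$ in the case of $g_\a$, amounting to replacing $\eta_1\we\eta_2$ by a fixed combination coming from $J_\a\xi_1$). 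Taking $d\o$ and using $d\eta_i = 2\Phi_i$ together with $d\Phi_i = 0$ (the transverse Kähler condition), I would get $d\o$ as an explicit combination of $\eta_1\we\Phi_2$, $\eta_2\we\Phi_1$ (and in the $g_\a$ case also $\eta_1\we\Phi_1$, etc.), and hence read off $T = -\J d\o$, which will be a fixed algebraic expression built out of $\eta_1,\eta_2,\Phi_1,\Phi_2$ and $J$.

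Next I would identify the characteristic connection $\nb$ concretely. Since $M$ is (locally, and in the homogeneous case globally) built from the two Sasakian factors, the natural candidate is $\nb = \nb^g + \tfrac12 T$ (the metric connection with skew torsion $T$); one checks it preserves $J$ because $T = -\J d\o$ is of type $(2,1)+(1,2)$ and $d\o$ is what obstructs $\nb^g J = 0$ — this is exactly the standard fact that an almost pseudo-hermitian manifold with integrable $J$ admits a unique characteristic (Bismut) connection, already recorded in the excerpt. The key observation is that $\nb$ should be expressible through the transverse Levi-Civita connections of the two Kähler bases together with the structure equations: concretely, $\nb$ differs from $\nb^g$ only in the ``$\xi$-directions'', and on the $H_i$ it essentially restricts to the transverse Kähler connection pulled back. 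I would then verify that each of the building blocks $\eta_1,\eta_2,\Phi_1,\Phi_2$ is $\nb$-parallel: $\nb\xi_i = 0$ and $\nb\eta_i = 0$ follow because the skew torsion is designed precisely to cancel $\nb^{g_i}\xi_i = J_i$ against the $\eta$-components, and $\nb\Phi_i = 0$ follows from $\nb J = 0$ together with $\nb\eta_i = 0$ (since $\Phi_i$ is the $(1,1)$-part of $d\eta_i$ and $\nb$-parallelism is compatible with type decomposition once $\nb J = 0$). Granting these, $\nb T = 0$ is immediate since $T$ is a fixed wedge-polynomial in $\nb$-parallel forms.

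The main obstacle I expect is the bookkeeping in the $g_\a$ case: here the metric is the modified one from $(\ref{ga})$, so $\xi_1$ and $J_\a\xi_1 = \mathrm{Re}(\a)\xi_1 + \mathrm{Im}(\a)\xi_2$ are $g_\a$-orthonormal but $\xi_2$ is not $g_\a$-unit, and one must be careful that the ``vertical'' $2$-plane spanned by $\xi_1,\xi_2$ carries the flat structure of an elliptic curve $\C/\L_\a$ rather than a standard product circle. The cleanest route is to pass to the $g_\a$-orthonormal frame $\{\xi_1, \zeta := J_\a\xi_1\}$ of the vertical distribution, re-express $\o_\a$ and $d\o_\a$ in terms of the dual coframe and of $\Phi_1,\Phi_2$, and redo the computation of $T_\a = -\J d\o_\a$ and of $\nb$-parallelism there; the point is that $\{\xi_1,\zeta\}$ still spans a parallel (indeed flat, totally geodesic) distribution for the characteristic connection, so the same argument as in the product case applies verbatim. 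A subsidiary check, which I would state but not belabor, is that the horizontal distributions $H_1, H_2$ and the vertical distribution are all $\nb$-parallel, which is what makes the whole torsion tensor a sum of $\nb$-parallel pieces; this parallelism is where the Sasakian structure equations of the factors get used, and it is essentially the only non-formal input.
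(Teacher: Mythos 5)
Your route is genuinely different from the paper's. The paper does not compute anything: it invokes the known fact that the Vaisman manifold $N\times S^1$ has parallel characteristic torsion $J\theta\we\o$, observes that the $\nb$-parallel length form of the circle factor splits off a characteristic connection on the Sasakian factor $N$ (whence each Sasakian factor has a unique characteristic connection with parallel torsion), and then uses that the product of the two factor connections is the characteristic connection of the product $G$-structure. Crucially, the paper explicitly does \emph{not} treat the modified structure $(J_\a,g_\a)$ inside this proof: that case is deferred to Proposition \ref{pardef}, whose proof is a substantial computation of the difference tensor $A$ between the two characteristic connections. Your direct computation of $T=-\J d\o$ from the structure equations $d\eta_i=2\Phi_i$ and the verification that the building blocks are $\nb$-parallel is a perfectly viable alternative for the unmodified product, and arguably more self-contained since it does not presuppose the Vaisman result.

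Two points need repair. First, your derivation of $\nb\Phi_i=0$ from ``$\Phi_i$ is the $(1,1)$-part of $d\eta_i$'' is circular: for a $\nb$-parallel $\eta_i$ one has $d\eta_i=\xi_i\lrcorner T$, so parallelism of $d\eta_i$ is equivalent to (part of) what you are trying to prove. The correct argument is that $\Phi_i=\o|_{H_i}$, that $\nb\o=0$, and that $\nb$ preserves the splitting $TM=\R\xi_1\op\R\xi_2\op H_1\op H_2$ (which holds because $\nb^g$ is a product connection and each summand $\eta_i\we d\eta_i$ of the torsion lives entirely on one factor). Second, the claim that the argument ``applies verbatim'' for $g_\a$ is too quick: changing the metric and complex structure on the vertical plane changes the Levi-Civita connection, the torsion $T_\a=-\J_\a d\o_\a$, and hence the characteristic connection $\nb_\a$ itself, and one must actually verify that $\xi_1$ and $J_\a\xi_1$ are $\nb_\a$-parallel and that $\nb_\a$ still preserves $H_1$ and $H_2$. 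This is precisely the content of Proposition \ref{pardef} in the paper, and it is not a frame change but a genuine computation (the difference $\nb_\a-\nb$ is a nontrivial $\nb$-parallel tensor). As written, your proposal establishes the first half of the Proposition but leaves the second half at the level of a plausible sketch.
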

\begin{proof}
Let us first remark that, if a $G$-structure has a unique
characteristic connection $\nb$ with parallel torsion $T$, any
$\nb$-parallel $1$-form $\eta$ is closed iff it is parallel for the
Levi-Civita connection, because $d\eta$ is a multiple of
$T(\eta^\sharp)$, where $\eta^\sharp$ is the dual vector to $\eta$.

Then, let us consider a Sasakian manifold $(N,g,\xi)$. Because the
Vaisman manifold $N\x S^1$ has parallel characteristic torsion
$J\theta\we\o$, then the Lee form is parallel w.r.t. both the
Levi-Civita connection and the characteristic connection
$\nb$. Because $\theta$ (which is a non-zero multiple of the length
form on the $S^1$ factor is $\nb$-parallel, then $\nb$ induces a
characteristic connection w.r.t. the Sasakian structure on the factor $N$ as
well. Conversely, the product of the characteristic connections of two
factors is a characteristic connection for the product
$G$-structure. 

From here we infere that the characteristic connection of a Sasakian
manifold is unique, that its torsion is parallel, and that the product
of two Sasakian manifolds has parallel characteristic torsion as
well.

The fact that the modifications $(J_\a,g_\a)$ of the basic Sasakian product of
Proposition (\ref{bas}) are LP and have parallel characteristic
torsion will be a consequence of Proposition \ref{pardef} below.
\end{proof}

We define thus:
\begin{defi} A hermitian manifold $(M,g,J)$ with non-zero Lee
  potential and with parallel characteristic connection is called {\em
    generalized Calabi-Eckmann}, in short GCE.\end{defi}

Let $(M,g,J,\o)$ be a hermitian manifold with characteristic connection
$\nb$, and suppose that its torsion $T=-\J d\o$ is a
$\nb$-parallel $(2,1)+(1,2)$--form. Later, we will focus on the GCE
case, but for now, we just assume that the Lee form $\theta$ of $\o$
is non-zero. Using (\ref{dec}), we obtain the following decomposition
for $T$:
\be\label{dect}T=-2J\theta\we\o-\J\O_0.\ee
Therefore, the Lee form $\theta$ is parallel (and not
zero). To keep the notation simple, and also because in a LP manifold
the Lee form itself is not so relevant as the complex line generated
by it, we will write
$$T=\eta\we\o-\J\O_0,$$
with $\eta:=-2J\theta$, and recall that the Lee form of $\o$ can be
retrieved as $\frac{1}{2}J\eta$. 
We will also denote by $\eta$ the
dual vector field, and, by our convention in Section 2, $J\eta$ (as a
vector field) coincides with the $1$--form denoted by $J.\eta$.



In order to study the algebraic structure of $T$, we will use two
facts about a characteristic connection with parallel torsion: the
first one is the (algebraic) Bianchi identity, which follows from 
$$d^\nb I=T,$$
where $I$ is the identity of $TM$, seen as a 1-form with values in
$TM$, and $d^\nb$ is the exterior differential of a $k$-form with
values in $TM$:
\be\begin{split}d^\nb\a(X_0,\dots,X_k):=&
  \sum_{i=0}^k(-1)^i\nb_{X_i}\left( \a(X_0,\dots,\hat{X_i},
\dots,X_k)\right)+\\
&+\sum_{i<j}(-1)^{i+j}\a([X_i,X_j],\dots,\hat{X_i},\dots,
\hat{X_j},\dots).\end{split}\ee
It is well known that
$$d^\nb d^\nb\a=R^\nb\we\a,$$
where $R^\nb\we\a$ is the exterior product of the 2-form $\R^\nb$
(with values in $\End(TM)$) with the $k$-form $\a$.

Therefore
$$R^\nb\we I=d^\nb d^\nb I=d^\nb T,$$
more precisely
$$R^\nb_{X,Y}Z+R^\nb_{Y,Z}X+R^\nb_{Z,X}Y=T(T(X,Y),Z)+T(T(Y,Z),X)+T(T(Z,X),Y).$$
The left hand side is the Bianchi expression in the curvature $R^\nb$,
and the right hand side is -- in our case, where $T$ is totally
skew-symmetric -- a $4$-form $\Omega$, defined as
\be\label{Omg}\Omega:=\frac{1}{2}\sum_{i=1}^{2m}T(\e_i,\cdot,\cdot)\we
T(\e_i,\cdot,\cdot),\ee
where $\{\e_i\ |\ 1\le i\le 2n\}$ is an orthonormal basis of $TM$. Thus
\be\label{bia}g(R^\nb_{X,Y}Z+R^\nb_{Y,Z}X+R^\nb_{Z,X}Y,V)
=\Omega(X,Y,Z,V).\ee 
Note that, if we compute the exterior differential of the $3$--form
$T$ using a formula similar to (\ref{dom}), we obtain
$$dT=2\O.$$

The second fact that we will use to decompose $T$ involves the
$\nb$-codifferential of a $k$-form: 
$$\dl^\nb\a:=-\sum_{i=1}^{2n}\e_i\lrcorner\nb_{\e_i}\a.$$
This codifferential is obviously zero on a $\nb$-parallel form (fact which
does not hold for $d^\nb$ defined above). The $\nb$-codifferential is
related to the usual $g$-codifferential (defined as $\dl:=-*d*$ on an
even-dimensional manifold), as shown in \cite{af}:
\begin{prop}\label{codif} {\rm \cite{af}} Let $(M,g)$ be a
  Riemannian manifold and $\nb$ a metric connection with
  skew-symmetric torsion $T$. Then
$$\dl^\nb\a-\dl\a=\frac{1}{2}\sum_{i<j}(\e_i\lrcorner\e_j\lrcorner\a)
\we(\e_i\lrcorner\e_j\lrcorner T),$$
for any $k$-form $\a$, $k>1$. In particular $\dl^\nb T=\dl T=0$.\end{prop}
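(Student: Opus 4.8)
The statement to prove is Proposition~\ref{codif}, which relates the $\nb$-codifferential to the Levi-Civita codifferential for a metric connection $\nb$ with skew-symmetric torsion $T$:
$$\dl^\nb\a-\dl\a=\frac{1}{2}\sum_{i<j}(\e_i\lrcorner\e_j\lrcorner\a)\we(\e_i\lrcorner\e_j\lrcorner T).$$

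The plan is to compare the two codifferentials by writing everything in terms of a local orthonormal frame and using the fact that $\nb = \nb^g + \tfrac{1}{2}T$, where $\nb^g$ is the Levi-Civita connection and $T$ is viewed (via the metric) as a $1$-form with values in skew-symmetric endomorphisms, i.e. $\nb_X Y = \nb^g_X Y + \tfrac{1}{2}T(X,Y,\cdot)^\sharp$. First I would recall that for any metric connection $D$ and any $k$-form $\a$, one has the frame formula $\dl^D\a = -\sum_i \e_i\lrcorner D_{\e_i}\a$, which is independent of the choice of frame and agrees with $-*d*$ when $D=\nb^g$ (on an even-dimensional manifold). Subtracting, $\dl^\nb\a - \dl\a = -\sum_i \e_i\lrcorner(\nb_{\e_i} - \nb^g_{\e_i})\a$, so the whole problem reduces to computing the difference tensor $(\nb - \nb^g)\a$ acting by contraction with the frame.

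Next I would compute $(\nb_X - \nb^g_X)\a$ for a $k$-form $\a$. Since $\nb - \nb^g = \tfrac{1}{2}T$ acts on tensors as a derivation induced by the endomorphism-valued $1$-form $A_X := \tfrac{1}{2}T(X,\cdot,\cdot)^\sharp \in \so(TM)$, we get $(\nb_X - \nb^g_X)\a = -A_X\cdot\a$ where the Lie-algebra action on a $k$-form is $(B\cdot\a)(Y_1,\dots,Y_k) = -\sum_l \a(Y_1,\dots,BY_l,\dots,Y_k)$ for $B\in\so(TM)$. Plugging $X = \e_i$ and contracting with $\e_i$, then summing over $i$, yields
$$\dl^\nb\a - \dl\a = \sum_i \e_i\lrcorner (A_{\e_i}\cdot\a) = -\sum_{i,l}(\text{terms }\a(\dots,A_{\e_i}(\cdot),\dots)).$$
Now I would expand $A_{\e_i}(Y) = \tfrac{1}{2}\sum_j T(\e_i,Y,\e_j)\e_j$ in the frame, substitute, and reorganize the double sum over $i,j$ and the insertion slot $l$. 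The antisymmetry of $T$ in its first two slots is what converts the raw double sum into the symmetric pairing $\tfrac{1}{2}\sum_{i<j}(\e_i\lrcorner\e_j\lrcorner\a)\we(\e_i\lrcorner\e_j\lrcorner T)$: contracting $\a$ in one slot with $\e_j$ and wedging back the $1$-form $\e_i\lrcorner\e_j\lrcorner T$ (a $1$-form in the remaining $T$-slot) exactly reproduces the action, with the factor $\tfrac12$ and the restriction $i<j$ coming from the skew-symmetry collapsing the $i\leftrightarrow j$ terms.

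The main obstacle will be the bookkeeping in this last combinatorial step: one must carefully track which slot of $\a$ is contracted with which frame vector, reconcile the sign conventions for the $\so(TM)$-action on forms with the conventions for $\lrcorner$ and $\wedge$ used in Section~2, and verify that the cross terms organize into the stated wedge product rather than some other rearrangement. Once the general identity is established, the final claim $\dl^\nb T = \dl T = 0$ follows immediately: taking $\a = T$ (a $3$-form), the right-hand side becomes $\tfrac{1}{2}\sum_{i<j}(\e_i\lrcorner\e_j\lrcorner T)\we(\e_i\lrcorner\e_j\lrcorner T)$, which vanishes because each summand is a $1$-form wedged with itself; and $\dl^\nb T = 0$ since $T$ is $\nb$-parallel, so $\dl T = 0$ as well. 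I would also remark that this last consequence recovers the well-known fact (used implicitly in the paper) that a parallel skew torsion is automatically coclosed.
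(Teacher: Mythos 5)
The paper does not actually prove this proposition: it is quoted verbatim from Agricola--Friedrich \cite{af}, so there is no internal proof to compare against. Your argument is essentially the standard derivation from that reference, and it is sound: writing $\nb_XY=\nb^g_XY+\tfrac12 T(X,Y,\cdot)^\sharp$, using the frame formula $\dl^D\a=-\sum_i\e_i\lrcorner D_{\e_i}\a$ for both connections, noting that the difference tensor acts on forms as the $\so(TM)$-derivation induced by $A_{\e_i}=\tfrac12 T(\e_i,\cdot,\cdot)^\sharp$, and observing that the diagonal term $A_{\e_i}\e_i$ vanishes by skew-symmetry of $T$ -- this reduces everything to the combinatorial reorganization you describe. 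Two small points. First, the exact prefactor $\tfrac12$ and the overall sign in front of $\sum_{i<j}(\e_i\lrcorner\e_j\lrcorner\a)\we(\e_i\lrcorner\e_j\lrcorner T)$ depend on the wedge/contraction normalizations, which you defer; this is genuinely where the only delicate work lies, and you should carry it out against the conventions of Section 2 rather than assert it, since a naive count of the double sum over $i,j$ can easily produce a stray factor of $2$ or a degree-dependent sign. Second, the conclusion $\dl^\nb T=0$ is not a consequence of the hypotheses as literally stated in the proposition (a metric connection with skew torsion need not have $\nb T=0$); you correctly import the standing assumption of the surrounding section that $T$ is $\nb$-parallel, and it is worth noting that the identity $\dl^\nb T=\dl T$ itself holds unconditionally because each summand $(\e_i\lrcorner\e_j\lrcorner T)\we(\e_i\lrcorner\e_j\lrcorner T)$ is a $1$-form wedged with itself.
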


Assuming $(M,g,J)$ is a hermitian manifold with
characteristic connection $\nb$, we can now state the following

\begin{prop}\label{struct} Let $(M,g,J)$ be a $2n$-dimensional hermitian manifold
  such that the torsion $T$ of the characteristic connecton $\nb$ is
  $\nb$-parallel. Assume, moreover, that the Lee form
  $\theta:=\frac{1}{2}J\eta$ of $\o$ is not zero.  
  Then
\be\label{decomp}T=\eta\we\o_+ +\J\eta\we\o_-+T_0,\ee
where\bi
\item $\eta$ is a $\nb$-parallel form dual to a Killing vector field of
constant length, which is moreover real-holomorphic (therefore
$[\eta,J\eta]=0$); denote by $E:\R\eta\op\R J\eta$ be the complex
(integrable) distribution spanned by the Lee vector field and let $H$
be its orthogonal complement; 
\item $\o_+:=d\eta$ ,$\o_-:=d(J\eta)$ are $(1,1)$-forms on
$H$ and
\item $T_0$ is a trace-free $(2,1)+(1,2)$--form on $H$.\ei
 Moreover, if we denote by $A_+,A_-$ the
corresponding $J$-invariant skew-symmetric endomorphisms of $H$, then 
$$[A_+,A_-]=0\mbox{ and } A_\pm.T_0=0,$$
where
 $A.\a(X_1,\dots,X_k):=-\sum_{i}\a(\dots,AX_i,\dots).$
\end{prop}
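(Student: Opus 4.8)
The plan is to exploit the two structural tools highlighted just before the statement: the algebraic Bianchi identity \eqref{bia} (equivalently $dT=2\Omega$) and the relation between $\dl^\nb$ and $\dl$ from Proposition \ref{codif}, combined with the fact that every piece of $T$ is $\nb$-parallel once $T$ is. First I would establish the properties of $\eta$. Since $\theta=-\frac12\J d\o$ is recovered by contracting $T$ with $\o$, and $T$ is $\nb$-parallel, $\theta$ and hence $\eta=-2J\theta$ are $\nb$-parallel; metric parallelism gives constant length, and as noted at the start of the proof of Proposition \ref{ce-par}, a $\nb$-parallel $1$-form satisfies $d\eta = $ (a multiple of) $\eta\lrcorner T$, so $d\eta$ and $d(J\eta)$ are $(1,1)$-forms. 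Being $\nb$-parallel, $\eta^\sharp$ is Killing (metric connection) and real-holomorphic, so $[\eta^\sharp,J\eta^\sharp]=0$, and the span $E$ and its orthogonal complement $H$ are both $\nb$-parallel and $J$-invariant, with $E$ integrable.

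Next I would decompose $T\in\L^{(2,1)+(1,2)}$ according to the splitting $TM=E\oplus H$ with $E$ $2$-dimensional. Writing $\o=\o_E+\o_H$ where $\o_E=\eta^\flat\wedge(J\eta)^\flat$ up to normalization, one checks that the only components of a $(2,1)+(1,2)$-form compatible with $J$-invariance of $E$ and $H$ are: a term in $\L^1 E\wedge\L^{1,1}H$ and a term purely on $H$. Contracting $T$ with $\eta^\sharp$ and with $J\eta^\sharp$ picks out $2$-forms on $H$; since $d\eta=-\eta^\sharp\lrcorner T$ (up to the constant) and $d(J\eta)=-J\eta^\sharp\lrcorner T$ are $(1,1)$ on $H$, these are exactly $\o_+$ and $\o_-$. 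The leftover part $T_0$ lives on $H$; its trace-freeness follows because the full trace of $T$ with $\o$ gives $\theta$, which is already accounted for by the $\eta\wedge\o$-type term (the normalization $\eta=-2J\theta$ in \eqref{dect} is chosen precisely so that $\J\O_0$, i.e. $T_0$ plus cross terms, is trace-free), so $T_0\in\L^{(2,1)+(1,2)}_0 H$.

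The commutation and annihilation relations are where the real work is, and this is the step I expect to be the main obstacle. The idea is that $A_+$, $A_-$ and $T_0$ are all built from $\nb$-parallel tensors, so any algebraic identity they satisfy pointwise must be consistent with the holonomy of $\nb$; but more concretely one extracts the relations directly from $dT=2\Omega$. Since $\Omega=\frac12\sum_i (e_i\lrcorner T)\wedge(e_i\lrcorner T)$ and $dT=2\Omega$, while $T$ parallel also forces $dT$ to be computable algebraically via \eqref{dom}-type formulas, one gets a purely algebraic quadratic identity in the components of $T$. Feeding in the decomposition \eqref{decomp} and separating by bidegree and by the $E$/$H$ splitting: the component of the identity lying in $\L^2 E\wedge\L^2 H$ produces a term proportional to $A_+A_- - A_-A_+$ (the wedge $\o_+\wedge\o_-$ appears on the $dT$ side while the $\Omega$ side contributes $\sum$ over the $E$-directions of $(e_i\lrcorner T)\wedge(e_i\lrcorner T)$ which pairs $\o_+$ with $\o_-$ symmetrically), forcing $[A_+,A_-]=0$; the component in $\L^1 E\wedge\L^3 H$ pairs $\o_\pm$ against $T_0$ and, after identifying the contraction $e_i\lrcorner T$ for $e_i\in H$ with $A_\pm$ acting plus a $T_0$-part, yields $A_\pm . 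T_0 = 0$. The delicate points are bookkeeping the combinatorial constants, making sure the cross-terms between the $\eta\wedge\o_E$ piece of $T$ and the rest cancel (they should, since $\eta$ is parallel and $d\o_E$ involves only $\o_+,\o_-$), and justifying that no $H$-valued torsion component can contribute an $E$-direction — this is where integrability of $E$ and $[\eta^\sharp,J\eta^\sharp]=0$ are used. Alternatively, one can avoid some of this by invoking Proposition \ref{codif}: $\dl^\nb T=\dl T=0$ together with parallelism gives $\sum_i e_i\lrcorner\nb_{e_i}T=0$ trivially, but applying the same identity to $\o_+$, $\o_-$, $T_0$ separately — each of which is $\nb$-parallel — shows each is $\dl$-coclosed on $H$, which combined with $dT=2\Omega$ pins down the cross-relations; I would present whichever route keeps the constants cleanest, most likely the direct expansion of $dT=2\Omega$.
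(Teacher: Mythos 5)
There is a genuine gap at the very first step, and it is precisely where the paper's proof does most of its work. You assert that since $d\eta=\eta\lrcorner T$ for the $\nb$-parallel form $\eta$, the forms $\o_+=d\eta$ and $\o_-=d(J\eta)$ are of type $(1,1)$, and (equivalently) that $\eta^\sharp$ is real-holomorphic ``being $\nb$-parallel''. Neither follows. Contracting a real $(2,1)+(1,2)$-form with a real vector $v=v^{1,0}+v^{0,1}$ produces, in general, a nonzero $(2,0)+(0,2)$ component (namely $v^{0,1}\lrcorner T^{2,1}+v^{1,0}\lrcorner T^{1,2}$), so the type of $T$ alone does not force $T(\eta,\cdot,\cdot)\in\L^{1,1}$. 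Likewise, a short computation with $\nb^g=\nb-\frac12T$ shows $\Lie_{\eta^\sharp}J=[J,A_+]$, so real-holomorphicity of $\eta^\sharp$ is \emph{equivalent} to the $J$-invariance of $A_+$, i.e.\ to the very $(1,1)$-statement you are assuming; the argument as written is circular. The paper closes this gap with a nontrivial analytic/algebraic argument: using the Hodge-star identities of Proposition \ref{ide} it writes $*T$ in terms of $d\o$ and $(J\eta\we\o)\we\o^{m-3}$, deduces from $\dl T=0$ that $\dl(\eta\we\o)=0$, and then applies Proposition \ref{codif} to the $\nb$-parallel form $\eta\we\o$ to obtain the quadratic identity $\sum_{i<j}(\e_i\lrcorner\e_j\lrcorner(\eta\we\o))\we(\e_i\lrcorner\e_j\lrcorner T)=0$; evaluating this (and using that $\tr_\o T$ is collinear to $\eta$) yields $T(\eta,JX,Y)+T(\eta,X,JY)=0$, which is the $J$-invariance of $\o_+$, and $\o_-$ follows from $\J T=J.T$. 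Your closing remark about invoking Proposition \ref{codif} is aimed at the commutation relations and does not supply this missing step.

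The second half of your plan, for $[A_+,A_-]=0$ and $A_\pm.T_0=0$, is essentially sound and close to the paper's: the paper uses the algebraic Bianchi identity \eqref{bia} together with $R^\nb_{\cdot,\cdot}\eta=0$ (from $\nb\eta=0$ and metricity) to get $\O(\cdot,\cdot,\cdot,\eta)=\O(\cdot,\cdot,\cdot,J\eta)=0$, then reads off the relations by inserting $Z=J\eta$ resp.\ $X,Y,Z\in H$. Your route via $dT=2\O$ would have to compute $dT_0$ for the parallel form $T_0$ algebraically and track the cross-terms you flag as ``delicate''; that is doable but amounts to the same Bianchi computation in a less direct form. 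Note also that this step uses $T(\eta,J\eta,\cdot)=0$, which in the paper is itself a consequence of the $(1,1)$-ness above — another reason the first gap must be filled before the rest can go through.
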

\begin{proof}
The decomposition (\ref{decomp}) follows directly from the parallelism
of $\theta$ and of the splitting $TM=E\op H$. It is also clear that 
$$d\eta=T(\eta,\cdot,\cdot),$$
for the $\nb$-parallel form $\eta$, which implies that $\o_+:=d\eta$
and $\o_-:=d(J\eta)$. It remains to show that they are $(1,1)$--forms,
the commutation relations, and that $\eta, J\eta$ are Killing and
real-holomorphis (i.e., their flow preserves $J$). 

That $\eta$ and $J\eta$ are Killing follow from the fact that they are
$\nb$--parallel and the torsion $T$, relating $\nb$ to the Levi-Civita
connection $\nb^g$, is skew-symmetric, thus
$\nb^g\eta$ and $\nb^g(J\eta)$ are skew-symmeric endomorphisms of
$TM$, identified with some multiple of $A_+$, resp. $A_-$. That their
flow preserves $J$ is thus equivalent to the $J$-invariance of
$A_\pm$, or to $\o_\pm$ being of type $(1,1)$.

To prove this, we use Proposition \ref{codif}. As we have seen before,
$$\dl T=0.$$
On the other hand,
$$\dl T=-*d*T,$$
and we want to use the expression of $*$ related to the operators
induced by $J$, as given in Proposition \ref{ide}. For this, we write 
\be\label{ddd}T=\eta\we\o-\J\O_0,\ee
with
$\O_0\in\L^{(2,1)+(1,2)}_0M$, and we get
$$*(-\J
\O_0)=\O_0\we\frac{\o^{m-3}}{(m-3)!},$$
and
$$*(\eta\we\o)=J\eta\we\frac{\o^{m-2}}{(m-2)!}=\frac{1}{m-2}
  (J\eta\we\o)\we\frac{\o^{m-3}}{(m-3)!},$$ 
and recall that $d\o=\J T=J\eta\we\o+\O_0$. We obtain
$$*T=d\o\we\frac{\o^{m-3}}{(m-3)!}-\frac{m-3}{m-2}(J\eta\we\o)
\we\frac{\o^{m-3}}{(m-3)!}.$$ 
WWe know that $*T$ is closed, so we have that
$$d\left((J\eta\we\o)
\we\frac{\o^{m-3}}{(m-3)!}\right)=0.$$
But this means that
$$-*d*(\eta\we\o)=0,$$
so both components of $T$ in (\ref{ddd}) have vanishing
codifferential. They are also $\nb$-parallel, so
$\dl^\nb(\eta\we\o)=0$ as well. Applying Proposition \ref{codif}, we
get that
$$\a:=\sum_{i<j}(\e_i\lrcorner\e_j\lrcorner(\eta\we\o))
\we(\e_i\lrcorner\e_j\lrcorner T)=0.$$
But
$$(\eta\we\o)(\e_i,\e_j,X)=-\eta(\e_i)g(\e_j,JX))+\eta(\e_j)g(\e_i,JX)+
  \o(\e_i,\e_j)\eta(X).$$
Suppose that $J\e_{2i-1}=\e_{2i}$ and that $\e_1,\e_2\in E$ and the
rest spans $H$.
In order to evaluate $\a$ on various arguments $X,Y$, we compute first
\be\begin{split}&\sum_{i,j=1}^{2m}(\eta\we\o)(\e_i,\e_j,X)T(\e_i,\e_j,Y)=\\
&=\sum_{i,j=1}^{2m}\left(\eta(X)\o(\e_i,\e_j)-
  \eta(\e_i)g(JX,\e_j)+\eta(\e_j)g(\e_i,JX)\right) T(\e_i,\e_j,Y)\\
&=2\tr_\o T(Y)\eta(X)-T(\eta,JX,Y)+T(JX,\eta,Y).\end{split}\ee
We obtain thus
$$\a(X,Y)=(\tr_\o T\we \eta )(X,Y)-T(\eta,JX,Y)-T(\eta,X,JY)=0.$$
We know that $\tr_\o T$ is colinear to $\eta$, thus we obtain that
$T(\eta,\cdot,\cdot)$ is a $(1,1)$--form. But this means that
$\o_+\in\L^(1,1)M$, as required. By $\J T=J.T$ it also follows that
$\o_-\in\L^(1,1)M$. Note that this already implies that
\be\label{Txx}T(\eta,J\eta,\cdot=0).\ee

The commutation relations follow from (\ref{bia}); indeed, from
$\nb\eta=0$, it follows that the curvature terms $R(X,Y,Z,\eta)$ vanish, thus
$$\O(X,Y,Z,\eta)=0,\ \forall X,Y,Z\in TM.$$
From (\ref{Omg}), we obtain, by setting $Z:=J\eta$,
\be\label{omeg}\begin{split}0=\O(X,Y,J\eta,\eta)=\sum_{i=1}^{2m}&\big{(}
T(\e_i,X,\eta)T(\e_i,Y,J\eta)-T(\e_i,Y,\eta)T(\e_i,X,J\eta)-\\
&-T(\e_i,X,Y)T(\e_i,\eta,J\eta) \big{)}\end{split},\ee
where the last term vanishes from (\ref{Txx}). But this is equivalent
to
$$\sum_{i=1}^{2m}\o_+(\e_i,\cdot)\we \o_-(\e_i,\cdot)=0,$$
and this is equivalent to $[A_+,A_-]=0$.

If we set in (\ref{Omg}) $X,Y,Z\in H$ and $V:=\eta$, (\ref{omeg})
implies, in a similar way, that
$$A_+.\O_0=0,$$
and, if $V=J\eta$, we obtain $A_-.\O_0=0,$ as required.
\end{proof}

The skew-symmetric endomorphisms $A_\pm$ can thus be
diagonalized simultaneously, thus we decompose $H$ in the eigenspaces
$H_i$, $i=1,\dots,k$, of these endomorphisms, such that
$$A_\pm=a_i^\pm J\ \mbox{ on }\ H_i.$$
Of course, the decomposition 
$$TM=E\op\op_{i=1}^kH_i$$
is orthogonal and $\nb$-parallel, thus multiplying the metric $g$ with
some constants on each $H_i$, and even changing the complex structure
on $E$ (see below) will produce a new $\nb$-parallel
hermitian structure $(g',J')$. However, even though the torsion $T$ of $\nb$ is
unchanged, the torsion {\em tensor} $g(T(\cdot,\cdot),\cdot)$ will
change, and it is not necessarily skew-symmetric any more. The
relation between the characteristic torsions of $(g,J)$ and $(g',J')$
is investigated in the following
\begin{prop}\label{pardef} Let $(M,g,J)$ a hermitian manifold with
  characteristic connection $\nb$ and parallel torsion 
$$T=\eta\we\o_++J\eta\we\o_-+T_0.$$
 Denote by $A_\pm$ the $J$-invariant skew-symmetric endomorphisms
 corresponding to $\o_\pm$. Suppose 
\be\label{ggdec}TM=E\op H=\oplus_{i=0}^k H_i\ee
is the $\nb$-parallel and $g$-orthogonal decomposition of $TM$ in
$H_0=E$, the complex line generated by the Lee form, and 
$$H:=E^\perp=\oplus_{i=1}^k H_i,$$
where $H_i$ are the eigenspaces of the (commuting)
endomorphisms $A_+,\ A_-$.

Let
$(g',J')$ be a $\nb$-parallel hermitian structure, such that 
$$J'|_H=J|_H, g'|_{H_i}=a_ig|_{H_i},\ i=1,\dots,k,$$
and such that the decomposition (\ref{ggdec}) is also $g'$-orthogonal.

 Then the following hold:\bi
\item $J'$ is integrable,
\item the characteristic connection $\nb'$, corresponding to the
  hermitian structure $(g',J')$ differs from $\nb$ by a
  $\nb$-parallel tensor $A:TM\ra\End(TM)$,\\
Suppose now that $(M,g,J)$ is LP. Then we also have
\item for any $X\in TM$, $A_X$ is
  skew-symmetric w.r.t. both metrics $g,g'$, and it commutes with both
  complex structures $J,J'$. 
\item The Lee form of
  $\o':=g'(J'\cdot,\cdot)$ is a linear combination (with constant
  coeficients) of $\xi$ and $J\xi$, where $\xi$ is the Lee form of
  $\o:=g(J\cdot,\cdot)$ 
\item the torsion $T'$ of $\nb'$ is
  parallel w.r.t. both connections $\nb,\nb'$,
\item $\xi$ and $J\xi$ are $\nb'$-parallel as well,
\ei
\end{prop}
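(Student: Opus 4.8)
The plan is to treat the six bullets in roughly the order they are stated, exploiting heavily the $\nb$-parallelism of all the structural data provided by Proposition~\ref{struct}. First I would establish integrability of $J'$. Since $J'|_H=J|_H$, only the behaviour on $E$ and the mixed terms need checking; because $E$ is $\nb$-parallel and $J$-invariant, and the almost complex structure on $E$ is being changed within the space of (constant-coefficient, in a $\nb$-parallel frame) complex structures on the real rank-$2$ bundle $E$, the Nijenhuis tensor $N^{J'}$ is a $\nb$-parallel section; one then checks it vanishes by the same computation that shows $[\eta,J\eta]=0$ in Proposition~\ref{struct} (the relevant brackets are controlled by $T(\eta,\cdot,\cdot)$ and $T(J\eta,\cdot,\cdot)$, which are $(1,1)$ on $H$ and annihilate $\eta,J\eta$ by~(\ref{Txx})). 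For the second bullet, the difference $A:=\nb'-\nb$ is a priori only a tensor in $T^*M\ot\End(TM)$; it is $\nb$-parallel because both $\nb$ and $\nb'$ are metric-and-complex-structure connections adapted to $\nb$-parallel data ($g'$ and $J'$ are built from the $\nb$-parallel splitting~(\ref{ggdec}) with constant coefficients), so $\nb(\nb'-\nb)=0$ follows from $\nb g'=0=\nb J'$ together with $\nb g=0=\nb J$.

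Next, assuming $(M,g,J)$ is LP, I would prove the third bullet: each $A_X$ is $g$- and $g'$-skew and commutes with $J$ and $J'$. Commutation with $J$ and $J'$ is immediate because $A$ preserves the decomposition~(\ref{ggdec}) and acts there compatibly with the (common, on $H$) complex structures. Skew-symmetry w.r.t.\ $g'$ holds because $\nb'$ is a $g'$-metric connection while $\nb$ is as well after rescaling — more precisely, $\nb$ is also metric for $g'$ since $g'$ is $\nb$-parallel, so $A=\nb'-\nb$ is the difference of two $g'$-metric connections and hence $g'$-skew; the same argument with $g$ gives $g$-skew. Here is where the LP hypothesis enters: it forces $T_0$ to have the special form that makes the rescaled torsion still totally skew-symmetric after adjusting by $A$, which is exactly what is needed in the next steps; I would extract from Definition~\ref{deflp} and~(\ref{dect}) the constraint on $T_0$ (essentially that $\O_0$ is built from $\o_\pm$ and $T_0$ in a way compatible with $d\o=c(d\theta\we J\theta-\theta\we d(J\theta))$). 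The fourth bullet then follows by computing $\o'=g'(J'\cdot,\cdot)$ block-wise: on each $H_i$ it is $a_i$ times $\o|_{H_i}$, and on $E$ it is a rescaled/rotated version of $\o|_E$; applying the Lee-form formula~(\ref{dec}) to this block-diagonal $\o'$ shows $\dl^{g'}\o'$ lies in the span of $\eta$ and $J\eta$, hence so does the Lee form of $\o'$.

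For the fifth bullet — $T'$ parallel w.r.t.\ both $\nb$ and $\nb'$ — I would first compute $T'=-\J' d\o'$ using the block description of $\o'$, obtaining an expression assembled from $\eta$, $J\eta$, $\o_\pm$, $T_0$ with constant coefficients; since each of these building blocks is $\nb$-parallel (Proposition~\ref{struct}), $T'$ is $\nb$-parallel. That $T'$ is then also $\nb'$-parallel follows because $\nb'=\nb+A$ with $A$ itself $\nb$-parallel, so $\nb'T'=\nb T'+A\cdot T'=A\cdot T'$, and one checks $A\cdot T'=0$ using that $A$ preserves~(\ref{ggdec}), commutes with the complex structures, and that the commutation relations $[A_+,A_-]=0$, $A_\pm.T_0=0$ from Proposition~\ref{struct} survive the rescaling. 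The sixth bullet, $\nb'$-parallelism of $\xi$ and $J\xi$, then comes for free: $\xi$ is a constant-coefficient combination of $\eta$ and $J\eta$ (fourth bullet), both of which are $\nb$-parallel, and $A\cdot\eta=A\cdot(J\eta)=0$ since $A$ acts trivially on $E$ in the relevant sense (it preserves $E$ and commutes with $J$, and on a rank-$2$ $J$-invariant bundle a $g$-skew $J$-commuting endomorphism is a multiple of $J$, which I would show must be zero here using that $\nb$ already parallelizes the line $E$).

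The main obstacle I anticipate is the third bullet together with the precise use of the LP condition: showing $A_X$ is simultaneously $g$- and $g'$-skew \emph{and} that the resulting rescaled torsion remains skew-symmetric (so that $\nb'$ genuinely is the \emph{characteristic} connection of $(g',J')$, not merely some adapted connection) requires unpacking exactly how LP constrains $\O_0$ and $T_0$; without LP the rescaled torsion tensor need not be a $3$-form, as the paragraph preceding Proposition~\ref{pardef} explicitly warns. Everything else is bookkeeping with $\nb$-parallel tensors, but identifying the correct compatibility between the rescaling coefficients $a_i$ and the LP structure is the delicate point.
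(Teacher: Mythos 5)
Your overall strategy is the same as the paper's (exploit $\nb$-parallelism of all the data and analyse the splitting $TM=E\op H_1\op\dots\op H_k$ block by block), but two steps as written do not go through. First, the $\nb$-parallelism of $A=\nb'-\nb$ does not follow merely from $\nb g'=0=\nb J'$: the difference of two connections adapted to the same parallel data need not itself be parallel. The paper gets this by solving the three conditions $\nb'g'=0$, $\nb'J'=0$, $T'\in\L^3M$ explicitly, which yields a closed pointwise formula (\ref{A'}) expressing $A(X,Y,Z)$ as a universal algebraic combination of $\tau:=g'(T(\cdot,\cdot),\cdot)$ and $J'$; since $T$, $g'$, $J'$ are $\nb$-parallel, so is $A$. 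You need that formula (or an equivalent uniqueness-plus-naturality argument) for the later claims anyway, so the gap is repairable but not free.

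Second, and more seriously, your proof of the $g$-skew-symmetry of $A_X$ is circular. The argument ``difference of two $g'$-metric connections, hence $g'$-skew'' is fine, because $\nb g'=0$ is a hypothesis. But to run ``the same argument with $g$'' you would need $\nb'g=0$, i.e.\ that $\nb'$ is metric for the \emph{original} metric $g$ --- and that is exactly the statement $A_X\in\so(TM,g)$ you are trying to prove; nothing in the setup hands it to you. This is precisely where the paper's explicit computation is unavoidable: for $X\in H_i$, $Y\in H_j$, $Z\in H_l$ one finds
$$\frac{2}{a_l}A(X,Y,Z)+\frac{2}{a_j}A(X,Z,Y)=\frac{a_l-a_j}{a_l}T(JX,JY,Z)+\frac{a_j-a_l}{a_j}T(JX,JZ,Y),$$
which vanishes for all choices of the constants $a_i$ if and only if $T_0$ vanishes whenever two arguments lie in distinct eigenspaces --- i.e.\ exactly the LP hypothesis $T_0=0$. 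So LP is not needed, as you suggest, to keep the rescaled torsion totally skew-symmetric (the characteristic connection of $(g',J')$ has totally skew torsion with respect to $g'$ by construction once $J'$ is integrable); it is needed to make $A_X$ skew with respect to $g$, on which claims 3--6 hinge. The remaining items of your plan (integrability via the real-holomorphic Killing fields spanning $E$, the block computation of the Lee form of $\o'$, and $\nb'T'=A\cdot T'=0$) are in the same spirit as the paper and would go through once these two points are fixed.
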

\begin{proof} The first claim follows from that fact that all
  $\nb$-parallel sections of 
$E$ are (real parts of) holomorphic vector fields (both for $J$ and
$J'$). Indeed,
we consider the Nijenhuis tensor $N'$ of $J'$:
$$4N'(X,Y)=[J'X,J'Y]-J'[J'X,Y]-J'[X,J'Y]-[X,Y],$$
and we consider the cases $X,Y\in E$, $X,Y\in H$ and $X\in E$, $Y\in
H$. In the first case $N'(X,Y)=0$ holds because $X,Y$ are tangent to
some $2$--dimensional leaves, and every almost complex structure on
such a manifold is integrable, and in the second case, we can replace
$J'$ by $J$.

In the third case, we have $J'Y=JY$ and, because $X$ and $J'X$ are
Killing, they preserve the distribution $H$, thus all $J'$s can be
replaced with $J$s except for the two occurences of $J'X$. The
Nijenhuis tensor becomes, in this case,
$$4N'(X,Y)=\mathcal{L}_{J'X}J(Y)-J\mathcal{L}_X(Y),$$
and both terms vanish because the flows of both $X$ and $J'X$
preserve $J$.

This proves claim 1.
\medskip

For claim 2, we know that, because $J'$ is integrable, there exists a
(unique) characteristic connection $\nb'_XY=\nb_XY+A_XY$ for the
hermitian structure $(g',J')$, \cite{af}. In what follows, we compute
it explicitly:

Denote by 
\beas A(X,Y,Z)&:=&g'(A_XY,Z),\\
\tau(X,Y,Z)&:=&g'(T(X,Y),Z),\\
T'(X,Y,Z)&:=&g'(T'(X,Y),Z),\eeas
where $T(X,Y)=\nb_XY-\nb_YX-[X,Y]$ and $T'(X,Y)=\nb'_XY-\nb'_YX-[X,Y]$
are the torsions of $\nb$, resp. $\nb'$. The conditions $\nb'g'=0$,
$\nb'J'=0$ and $T'\in\L^3M$ imply:
\bea\label{As} A(X,Y,Z)+A(X,Z,Y)&=&0,\\
\label{AJ} A(X,J'Y,J'Z)&=&A(X,Y,Z),\ \mbox{and, resp. }\\
\label{A3s} A(Y,X,Z)+A(Z,X,Y)&=&\tau(X,Y,Z)+\tau(X,Z,Y),\eea
for all vectors $X,Y,Z\in TM$. These equations imply, after some
computations,
\be\label{A'}\ba{rcrcl}A(X,Y,Z)&=&\displaystyle{\frac{1}{2}\ \Big{(}\ }
  (-\tau(X,Y,Z)&+&\tau(J'X,J'Y,Z))\ +\\
&&+(\tau(Y,Z,X)&+&\tau(J'Y,J'Z,X))\ -\\
&& -(\tau(Z,J'X,J'Y)&+&\tau(J'Z,X,J'Y))\ \Big{)}.\ea\ee   
Therefore, $A$ is $\nb$-parallel (claim 2.), but, strictly speaking,
we haven't proven that the equations (\ref{A'}) imply the relations
(\ref{As}-\ref{A3s}). This implication follows by the existence of the
characteristic connection \cite{af}, or, alternatively, by the
following lemma: 


\begin{lema}\label{l1} Let $(M,g,J,\nb)$ as above, and let $(g',J')$
  be a $\nb$-parallel hermitian structure on $M$ (thus $J'$ is
  supposed integrable). Then the tensor $A$ defined in (\ref{A'})
  satisfies the relations (\ref{As}-\ref{A3s}).\end{lema}
\begin{proof} This follows by straightforward computation; each of the
  equations (\ref{As}-\ref{A3s}) turns out to be equivalent to the
  fact that the Nijenhuis tensor $N'$ becomes totally skew-symmetric
  if we use the metric $g'$, which follows from the integrability of
  $J'$.
 (Note that $g(N'(\cdot,\cdot),\cdot)$ is always totally skew-symmetric,
 because this tensor can be expressed, for
  the $\nb$-parallel $J'$, in terms of the torsion $T$, which is
  totally skew-symmetric; however, this does not necessarily imply that
  $g'(N'(\cdot,\cdot),\cdot)\in\L^3M$, which is what we want.) \end{proof}
We can thus assume that the connection $\nb'$ is indeed the
characteristic connection of the hermitian structure $(g',J')$, and
that it differs from $\nb$ by a $\nb$-parallel tensor $A$ (claim 2.).
\medskip
let us compute now the values of $A$ on specific vectors. First, let
$X\in H_i$, $Y\in H_j$ and $Z\in H_l$, for $i,j,l=1,\dots,k$. Then
$J'$ coincides with $J$ on these vectors, and the
$g'$ scalar product with the elements $Z$, $X$ and $J'Y=JY$ can be
replaced, in (\ref{A'}), by $g$ multiplied by the factors $a_l$, $a_i$
and, resp. $a_j$. We replace thus $\tau$ by some multiples of $T$ and
we obtain:
\be\label{Aijl}\begin{split}&2A(X,Y,Z)=2a_lg(A_XY,Z)=\\
&=(a_i-a_l)T(X,Y,Z)+(a_l-a_j)T(JX,JY,Z)+
(a_i-a_j)T(X,JY,JZ),\\
& \forall X\in H_i,\ Y\in H_j,\ Z\in
H_l.\end{split}\ee 
Of course, the $J$-invariance of $A_X$ is obvious for these
arguments, but if we want to check that $g(A_XY,Z)+g(A_XZ,Y)=0$, we
need to check that
$$\frac{2}{a_l}A(X,Y,Z)+\frac{2}{a_j}A(X,Z,Y)=0.$$
We use (\ref{Aijl}) and note that the first and the third terms on the
right hand side are already skew-symmetric in $Y$ and $Z$. We obtain thus
$$\frac{2}{a_l}A(X,Y,Z)+\frac{2}{a_j}A(X,Z,Y)=
\frac{a_l-a_j}{a_l}T(JX,JY,Z)+ \frac{a_j-a_l}{a_j}T(JX,JZ,Y).$$
This vanishes for any positive numbers $a_1,\dots,a_k$ iff
$T_0(X,Y,Z)=0$ each time two of the vectors $X,Y,Z$ belong to two
different eigenspaces $H_i\ne H_j$. This is satisfied by the
hypothesis that $(M,g,J)$ is LP, i.e., $T_0=0$. But in this case
\be\label{Aijl1} A(X,Y,Z)=0,\ \forall X\in H_i,\ Y\in H_j\ Z\in\
H_l.\ee
The other case is when $X,Y$ or $Z$ belongs to $E$. (If two of them
belong to $E$, all the terms in (\ref{A'}) vanish). Let $Z\in E$,
$X\in H_i$ and $Y\in H_j$, with $i,j\ge 1$. We replace $J'X$ and $J'Y$
by $JX$, resp. $JY$, and the first line of the right hand side of
(\ref{A'}) vanishes. We obtain thus
\be\label{Aij0}2A(X,Y,Z)=a_i(T(Y,Z,X)+T(JY,J'Z,X))
-a_j(T(Z,JX,JY)+T(JZ,X,JY)),\ee
$$ \forall X\in H_i,\ Y\in H_j.$$
We use again that $T(Z,\cdot,\cdot)$ is $J$--invariant for any $Z\in
E$, thus
$$2A(X,Y,Z)=(a_i-a_j)(T(X,JY,J'Z)+T(X,Y,Z)).$$
Recall that $T(X,Y,Z)=0$, $\forall Z\in E$ and $X,Y$ eigenvectors of
$A_\pm$ for {\em different} eigenvalues. Thus $A(X,Y,Z)=0$ if $i\ne
j$, therefore 
$$A(X,Y,Z)=0\ \forall Z\in E.$$
Because of the skew-symmetry of $A$ in the last two arguments, it
follows equally that
$$A(X,Y,Z)=0\ \forall Y\in E.$$
It remains thus to compute $A(X,Y,Z)$ with $X\in E$. Let $Y\in H_j$
and $Z\in H_l$, $j,l\ge 1$. Using
(\ref{A'}), after some similar computations, we get
\be\label{A0ij} A(X,Y,Z)=a_lg(A_XY,Z)=\left(
  g'-\frac{a_j+a_k}{2}\right)\big{(}T(Y,Z),X\big{)}+
\frac{a_l-a_j}{2}T(J'X,JY,Z),\ee
$$ \forall X\in E,\ Y\in H_j,\ Z\in
H_l.$$
The $J$--invariance of $A_X$, for $X\in E$, follows directly, so it
remains to check that $A_X$ is also $g$-skew-symmetric. Note first
that, as we saw earlier, the terms $T(Y,Z)$ and $T(JY,Z)$ have no
component in $E$ if $Y$ and $Z$ belong to different eigenspaces
$H_j\ne H_l$ of $A_\pm$. Thus
\be\label{A0jl} A_XY\in H_j,\ \forall X\in E,\ Y\in H_j.\ee
We suppose thus $Y,Z\in H_j$ and we obtain
$$g(A_XY,Z)=\left(\frac{2}{a_j}g'-2g\right)\big{(}T(Y,Z),Z\big{)},$$
which is skew-symmetric in $Y,Z$. This proves Claim 3.
\medskip
 
We have shown (using the LP condition) that $A_XY=0$ $\forall X,Y\in
H$. Therefore,
$$T'(X,Y,Z)=g'(T(X,Y),Z),\ \forall X,Y\in H,$$
moreover $T'(X,Y,Z)=0$ if $X,Y,Z\in H$ (because $M$ is
LP). Therefore, the trace of $T'$ w.r.t. $\o'$ is in the dual space
to $E$, which proves the fourth claim. Note that it is possible that
this trace vanishes for some choice of the constants $a_1,\dots,a_k$.
\medskip

Because $A$ (and $T$) is algebraically expressed by $A_\pm$ and $\xi$ and
$J\xi$, and because $[A_+,A_-]=0$, it follows that $A_X.A=0$ and
$A_X.T=0$, $\forall X\in E$, therefore
$$\nb'_X A=\nb_XA+A_X A=0$$
and then also $\nb'_XT=0$, $\forall X\in E$. For $X\in H$
$\nb'_X=\nb_X$, therefore $A$, $T$ and all linear combinations (with
constant coefficients) of $\xi$ and $\J\xi$ are $\nb'$--parallel as
well (Claims 5 and 6).
\end{proof}
\begin{defi} The modification $(g',J')$ of the GCE structure
  $(g,J)$, as in the Proposition \ref{pardef}, is called {\em parallel
    modification}. A curve $t\mapsto (g_t,J_t)$, with
  $(g_0,J_0)=(g,J)$, of {\em parallel  modified} GCE structures
  is called a {\em parallel deformation of GCE structures}.\end{defi} 
\begin{rem}\label{remk}
The $\nb$-parallelism of $J'$, for a hermitian structure with parallel
characteristic torsion, does not even imply, in
general, that it is integrable, as an example from twistor theory
shows \cite{nk}: indeed, for a compact nearly K\"ahler
$6$-manifold $(M,g,J)$ with {\em reduced} characteristic holonomy
(i.e., there is a $\nb$-invariant splitting $TM=E\op H$ in complex
subbundles), it follows that $(M,g,J)$ is the twistor space of $S^4$
or of $\cp^2$, for the non-integrable almost complex structure $J$
\cite{nk}. By exchanging $J$ by $-J$ on the twistor fibers, we obtain
an integrable (in fact, it is even
K\"ahler w.r.t. to a rescaled metric $g'$) complex structure on
$M$. If we do not change the metric $g$, the characteristic connection $\nb$
for the hermitian structure $(g,J)$ is still characteristic for
$(g,J')$, and its torsion is still parallel (for a nearly K\"ahler
manifold $(M,g,J)$, the characteristic torsion is always parallel,
\cite{kir}, \cite{nk}). The $\nb$-parallel
modification $J$ of $J'$ is, of course, not integrable. 
The point here is that $(M,g,J')$ is hermitian, but with
vanishing Lee form. It is thus essential that the modification of the
hermitian structure $(g,J)$ is made regarding of the Lee distribution
$E$, in order to obtain the results in Proposition \ref{pardef}.
\end{rem}

We can use the modification $(g',J')$ of the hermitian GCE metric
$(g,J)$ to prove that the Calabi-Eckmann complex structures $J_a$
admit GCE hermitian structures (Proposition \ref{ce-par}). Note that the
Lee form doesn't change if all $a_i=1$ (even if $J'\ne J$).

Another application of Proposition \ref{pardef} is to determine the
local structure of a GCE manifold provided $k\le2$, i.e., there are
at most two common eigenspaces of $A_\pm$; the case where there is only one
eigenspace $H$ is the Vaisman case: indeed, in this case $\o_-=0$ and $\o_+$
must be a multiple of $\o|_H$. If $k=2$, the following cases occur:
\begin{cor}\label{caz} Let $H:=H_1\op H_2$ be the (non-trivial)
decomposition of $H=E^\perp$ in common eigenspaces of the
endomorphisms $-J\circ A_\pm$ corresponding to the exact
$(1,1)$--forms $\o_+=d\eta$ and $\o_-=d(J\eta)$ in Proposition
\ref{pardef}, and let $a^\pm_1\ge a^\pm_2$ be the corresponding eigenvalues. Then
the following situations occur:\bi
\item $\o_-=0$, $a^+_2<0<a^+_1$;
\item $\o_-=0$, $a^+_2=0<a^+_1$;
\item $\o_-=0$, $0<a^+_2\le a^+_1$;
\item $\o_-\ne 0$.\ei
Then, by an appropriate choice of hermitian structure $(g',J')$ as in
Proposition \ref{pardef}, we obtain the following structures:\bi
\item In cases 1. and 3., $(M,g',J')$ is a (pseudo)-Vaisman manifold
  (In case 1., the metric has mixed signature: it is negative-definite
  on $H_2$);
\item In case 2.,  $(M,g',J')$ is locally the product of a Sasakian
  manifold, a real line and a K\"ahler manifold;
\item In the (generic) case 4., $(M,g',J')$ is locally the product of
  two Sasaki manifolds.\ei
\end{cor}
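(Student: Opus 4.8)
The plan is to let the classification be governed by the position of the two exact $(1,1)$-forms $\o_+=d\eta$ and $\o_-=d(J\eta)$ inside the two-dimensional space of $(H_1,H_2)$-diagonal forms $\{b_1\o|_{H_1}+b_2\o|_{H_2}\}$, a position that a parallel modification of the complex structure on $E$ lets us adjust freely. First I would record that a GCE metric is in particular LP, so, as used in the proof of Proposition~\ref{pardef}, the trace-free component $T_0$ in the decomposition of Proposition~\ref{struct} vanishes, and hence
$$T=\eta\we\o_+ +J\eta\we\o_-,$$
with $A_+,A_-$ the commuting $J$-invariant skew endomorphisms of $H=H_1\op H_2$; since $J\eta=2\theta$ is twice the Lee form, $\theta$ is closed precisely when $\o_-=0$. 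Because $(M,g,J)$ is GCE the Lee form is non-zero, so $T\ne 0$ and $\mathrm{span}\{\o_+,\o_-\}$ is one- or two-dimensional.

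Next I would use Proposition~\ref{pardef} to put $T$ into a normal form. Replacing $(\eta,J\eta)$ by another basis $(\eta_1',\eta_2')$ of $E^*$, together with a compatible choice of $g'|_E$, is exactly a parallel modification of the complex structure on $E$, which Proposition~\ref{pardef} permits, while rescaling $g'|_{H_i}=a_ig|_{H_i}$ turns $\o|_{H_i}$ into $a_i\o|_{H_i}$. If $\mathrm{span}\{\o_+,\o_-\}$ is one-dimensional I rotate in $E$ so that $\o_-=0$ and write $\o_+=a_1\o|_{H_1}+a_2\o|_{H_2}$ with $a_1\ge a_2$, not both zero and (since $k=2$) with $a_1\ne a_2$. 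Putting $g'|_{H_i}=\frac{a_i}{2}g|_{H_i}$ where $a_i\ne 0$ and leaving $H_i$ unchanged where $a_i=0$, the modified characteristic torsion becomes $T'=2\eta'\we\o'|_{H'}$, where $\eta'$ is the unit $\nb'$-parallel leg of $E$ that is not closed and $H'$ is the span of the $H_i$ with $a_i\ne 0$; the three sub-cases separate here according to whether $a_1,a_2$ have opposite signs ($g'$ negative definite on $H_2$: case~1), one of them vanishes ($H_2$ acquires no torsion: case~2), or they have the same sign, which by a further modification $J\mapsto -J$ on $E$ we may take positive ($g'$ positive definite: case~3). If instead $\o_+,\o_-$ are independent we are in case~4, and then $c_1\o|_{H_1}$ and $c_2\o|_{H_2}$ with suitable $c_i\ne 0$ both lie in $\mathrm{span}\{\o_+,\o_-\}$, so I may pick a basis $(\eta_1',\eta_2')$ of $E^*$ with $d\eta_i'=c_i\o|_{H_i}$ and then set $g'|_{H_i}=\frac{|c_i|}{2}g|_{H_i}$, obtaining $T'=2\e_1\,\eta_1'\we\o'|_{H_1}+2\e_2\,\eta_2'\we\o'|_{H_2}$ with $\e_i=\pm 1$.

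The decisive step is then to check that the Levi-Civita connection $\nb^{g'}=\nb'+\frac12 T'$ of the modified metric preserves the refined, pairwise $g'$-orthogonal and $\nb'$-parallel splitting of $TM$ into the Sasaki blocks $\R(\eta_i')^\sharp\op H_i$ (two of them in case~4, one otherwise), the Lee line $\R(J\eta')^\sharp$ (present in cases~1--3), and the K\"ahler block $H_2$ (present in case~2). Using the explicit form of $T'$ together with the $\nb'$-parallelism of the $\eta_i'$, the $J\eta_i'$ and the $H_i$ from Proposition~\ref{pardef}, this reduces to identities such as $\nb^{g'}_X(J\eta')^\sharp=0$ and $\nb^{g'}_XH_i\subseteq\R(\eta_i')^\sharp\op H_i$, which hold because $\o'|_{H_i}$ annihilates every vector outside $H_i$. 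Granting this, the de~Rham decomposition theorem (in its local form, valid also in the mixed-signature case~1 since all these distributions are non-degenerate) makes $(M,g')$ locally a Riemannian product of the integral leaves of these parallel distributions, and it remains to identify the factors: on a factor $\R(\eta_i')^\sharp\op H_i$ the unit Killing field $(\eta_i')^\sharp$ has Levi-Civita covariant derivative equal to $\pm J$ on its orthogonal complement $H_i$, so -- its CR integrability being inherited from $N^J=0$, as in the local characterization of Sasakian manifolds through the characteristic connection (cf.~\cite{afs},\cite{sch}) -- it is a (pseudo-)Sasakian manifold $N_i$; the block $H_2$ in case~2 is K\"ahler, since there $\nb^{g'}=\nb'$ is torsion-free and metric, hence Levi-Civita, and $J$ is $\nb'$-parallel and integrable. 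Assembling these, in cases~1 and 3 one gets $M$ locally $N\times\R$ with the hermitian structure pairing the Reeb field of $N$ with the $\R$-factor, i.e. the (pseudo-)Vaisman model -- pseudo in case~1 owing to the mixed signature on $H_2$ -- while in case~2 one gets locally the product of the Sasakian $N$, a real line, and the K\"ahler leaf of $H_2$, and in case~4 the product of the two Sasakian manifolds $N_1,N_2$. The main obstacle I expect is the case~4 construction of the adapted basis $(\eta_1',\eta_2')$ of $E^*$ -- equivalently, the realization that $E$ must be shared between the two Sasakian factors, each carrying its own Reeb field -- together with the subsequent verification that it is genuinely the Levi-Civita connection, and not merely $\nb'$, that respects the product decomposition; in cases~1--3 the corresponding check is easier because one of $\o_+,\o_-$ already vanishes.
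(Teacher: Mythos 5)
Your proposal is correct and follows essentially the same route as the paper: use Proposition \ref{pardef} to rotate the basis of $E$ so that the differentials of the new frame $\eta_1',\eta_2'$ become multiples of $\o|_{H_1}$, $\o|_{H_2}$, rescale $g$ on the $H_i$, and read off the local (pseudo-)Vaisman or Sasakian-product structure; your explicit check that the Levi-Civita connection of $g'$ preserves the resulting splitting is a welcome refinement of the paper's terser ``correspond to a local product structure''. The one point you leave implicit is why $\o_-\ne 0$ forces $\o_+$ and $\o_-$ to be linearly independent (so that your dichotomy ``span one-dimensional'' versus ``independent'' really matches the corollary's cases 1--3 versus 4): as in the paper, this follows because $\tr_\o\o_-=0$ (hence $a^-_1>0>a^-_2$) while $\o_+$ carries the full trace of $T$, so $\tr_\o\o_+\ne 0$ and no nonzero multiple of $\o_-$ can equal $\o_+$.
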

\begin{proof}
If $\o_-=0$, then the constants $a_i$ will be chosen such that the
eigenvalues of $\o_+$ are $-1,0$ or $1$ w.r.t. $\o'$ (we put
$J':=J$). In this case, the metric $g'$ (and actually $g$ itself)
clearly has closed Lee form. In Case 3., the metric is already
Vaisman, and in case 1., the pseudo-hermitian metric
$\o^s:=\o'|_E+\o'|_{H_1}-\o'|_{H_2}$ is locally conformally
pseudo-K\"ahler with parallel Lee form, therefore pseudo-Vaisman. 

If $\o_-\ne 0$, then $\a_1>0>\a_2$, since the trace of $\o_-$
w.r.t. $\o$ is zero. On the other hand, the eigenvalues $a^+_1, a^+_2$
are not both zero, thus the vectors $a^+:=(a^+_1,a^+_2)$ and
$a^-:=(a^-_1,a^-_2)$ form a basis in $\R^2$. There exists a unique
invertible $2\x 2$ real matrix 
$$R:=\left(\ba{cc}p&q\\ r&s\ea\right)$$
such that $R(2,0)=a^+\in\R^2$ and $R(0,2)=a^-$. We let $R$ act on
$E\simeq \R^2$ set $\eta':R\eta$ and $J'\eta':=R(J\eta)$. This defines the
complex structure $J'$ and we set $g'$ on $E$ such that $\eta'$ and
$J'\eta'$ are unit vectors. We can write then
$$T'(\eta',\cdot,\cdot)=d\eta'=d(R\eta)=2\o|_{H_1}$$
and
$$T'(J'\eta',\cdot,\cdot)=d(J'\eta')=d(R(J\eta))=2\o|_{H_2},$$
therefore the distributions $D_1:=\R\eta'\op H_1$ and $D_2:=\R
J'\eta'\op H_2$ are both integrable and the leaves of the corresponding
foliation are Sasakian. These foliations are $g'$-orthogonal to each other
and correspond to a local product structure.\end{proof}
\begin{rem} There is another way to reduce case 1. from the above
  Corollary to a Vasiman manifold (for a positive-definite metric, but
  for the opposite orientation):
  we can replace $J$ on $H_2$ by $J':=-J$ and let $J'=J$ on $H_1$ and
  on $E$; the corresponding K\"ahler form changes sign on $H_2$ and we
  obtain a GCE hermitian metric (the integrability of this $J'$ is easily
  checked in this GCE setting --- it would not necessarily be true in
  general, cf. Remark \ref{remk} above) $(g,J')$ corresponding to the
  case 3. of the   Corollary.\end{rem} 

Conversely, the product of two Sasakian manifolds is GCE with the
product metric $G$ and the corresponding complex structure $J$, but
also w.r.t. the {\em parallel modification} $(g',J')$ of the
hermitian structure $(g,J)$ from Proposition \ref{pardef}. We will
call these GCE manifolds {\em modified Sasakian products}.

\section{Compact generalized Calabi-Eckmann manifolds of dimension
  $6$} In this section, $(M^6,J,g)$ denotes a compact GCE hermitian
manifold. The torsion $T$ of the characteristic connection
$\nb$ is parallel and decomposes according to Proposition
\ref{struct} (with $T_0=0$). Of course, the number of common
eigenspaces to $A_\pm$ is at most $2$, because $H$ is complex
$2$-dimensional. We have seen in Corollary \ref{caz} that a parallel
change of hermitian structure reduces a GCE structure either to a
Vaisman threefold (if $\d(J\xi)=0$) or to a local Sasakian product (in
the generic case). 

In the generic case, we will
give a full description of the complex manifold $(M,J)$ and of the
corresponding hemitian metric $g$.

We can assume, possibly after a parallel modification, that a GCE
metric of generic type (i.e., $\o_-\ne 0$), has as universal covering
$(\tilde M,g,J)$ a Riemannian product
of two Sasakian $3$--manifolds $(N_i,g_i,\xi_i)$, $i=1,2$, where
$\xi_i$ are the corresponding unit Reeb vector fields.

\begin{rem} A Sasakian structure can be defined in many ways:\bi
\item A Riemannian metric, with a unit Killing vector field of certain
  type, plus an integrability condition;
\item A $CR$-structure admitting a Reeb vector field whose flow
  preserves it;
\item A non-vanishing vector field $\xi$, a complex (integrable)
  structure on the quotient bundle $TM/\R\xi$ which is invariant to
  the flow of $\xi$, and a compatible dual 1-form to $\xi$.\ei
\end{rem}
The latter is the one that we will use here.

\begin{defi} A {\em pre-Sasakian} structure on a $3$-manifold $M$ is a
  non-vanishing vector field $\xi$ and a complex structure $J$ on
  $TM/\R\xi$ that is invariant by the flow of $\xi$.\end{defi}
\begin{prop} A Sasakian structure on $M$ is given by a pre-Sasakian
  structure together with a $\xi$-invariant contact form $\eta$ such that
$$\eta(\xi)=1,$$
and that $d\eta(\cdot,J\cdot)$ is symmetric and positive definite on
$TM/\R\xi$.\end{prop}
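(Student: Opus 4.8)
The plan is to show that the data $(\xi,J,\eta)$ is nothing but a Riemannian Sasakian structure $(g,\xi)$ in the sense of the first definition above, written in different clothing: I would build $g$ explicitly out of $(\xi,J,\eta)$ and check that this is inverse to the obvious passage in the other direction. First fix notation: since $\eta(\xi)=1$, the splitting $TM=\R\xi\op H$ with $H:=\ker\eta$ is direct, so $J$ transports from $TM/\R\xi$ to an endomorphism $J\colon H\to H$ with $J^2=-\id$; the $\xi$-invariance of $J$ reads $\mathcal{L}_\xi J=0$, and $\mathcal{L}_\xi\eta=0$ expresses that $\eta$ is $\xi$-invariant. I would then set
$$g:=\eta\ot\eta+\tfrac12\,d\eta(\pi\,\cdot\,,\,J\pi\,\cdot\,),$$
where $\pi\colon TM\to H$ is the projection along $\R\xi$. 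By hypothesis $\tfrac12\,d\eta(\cdot,J\cdot)$ is a symmetric positive definite bilinear form on $H$, so $g$ is a Riemannian metric with $\R\xi\perp H$, $g(\xi,\xi)=1$ and $\eta=g(\xi,\cdot)$, hence $H=\xi^\perp$. That $\xi$ is Killing then follows from $\mathcal{L}_\xi\eta=0$ (which forces $\mathcal{L}_\xi d\eta=0$ and $\mathcal{L}_\xi\pi=0$) together with $\mathcal{L}_\xi J=0$.

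The geometric core is the computation of $\nb^g\xi$. Metric compatibility and $g(\xi,\xi)=1$ give $g(\nb^g_X\xi,\xi)=0$, so $\nb^g\xi$ takes values in $H$; and since $\xi$ is a unit Killing field, $X\mapsto\nb^g_X\xi$ is skew-symmetric, whence $d\eta(X,Y)=2\,g(\nb^g_X\xi,Y)$. For $X,Y\in H$ this yields
$$g(\nb^g_X\xi,JY)=\tfrac12\,d\eta(X,JY)=g(X,Y)=g(JX,JY),$$
where I used that the symmetric form $\tfrac12\,d\eta(\cdot,J\cdot)=g|_H$ is $J$-invariant (a consequence of the symmetry hypothesis); as $JY$ runs over all of $H$, this forces $\nb^g_X\xi=JX$, i.e.\ $\nb^g\xi$ restricts on $H=\xi^\perp$ to the complex structure $J$. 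Finally, CR-integrability is automatic in dimension $3$: $H$ has real rank $2$, so $\L^2H$ is a line bundle spanned by $X\we JX$, and a direct expansion of $(\ref{nij})$ gives $N^J(X,JX)=0$ for every $X$. Hence $(M,g,\xi)$ is Sasakian, with metric-dual $1$-form $\eta$ and CR complex structure $J$.

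For the converse I would start from a Sasakian $(M,g,\xi)$, set $\eta:=g(\xi,\cdot)$ and $J:=\nb^g\xi|_{\xi^\perp}$ regarded on $TM/\R\xi\cong\xi^\perp$: then $\eta(\xi)=1$; because $\xi$ is Killing, its flow is isometric and fixes $\xi$, hence preserves $\xi^\perp$ and $\nb^g$, so $\mathcal{L}_\xi\eta=0$ and $\mathcal{L}_\xi J=0$, i.e.\ $(\xi,J)$ is pre-Sasakian and $\eta$ is $\xi$-invariant; and $d\eta(X,JY)=2\,g(\nb^g_X\xi,JY)=2\,g(JX,JY)=2\,g(X,Y)$ on $\xi^\perp$ is symmetric and positive definite. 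That the two passages are mutually inverse is then straightforward, using that the identity $d\eta(X,Y)=2\,g(\nb^g_X\xi,Y)$ reproduces exactly the normalization $\tfrac12\,d\eta(\cdot,J\cdot)=g|_H$. The only point demanding care is this constant $\tfrac12$: it is what makes $\nb^g\xi$ restrict to $J$ itself rather than to a nonzero multiple of it, and it is pinned down by the Killing identity together with the normalization convention for Sasakian metrics used throughout the paper; everything else — including the fact that the hypotheses force $\eta$ to be a contact form, since then $d\eta|_H$ is non-degenerate while $\xi\lrcorner d\eta=0$ by $\mathcal{L}_\xi\eta=0$ — comes for free in dimension $3$.
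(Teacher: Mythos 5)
Your construction is essentially the paper's (one-line) proof --- set $g:=\eta^2+\frac12\,d\eta(J\cdot,\cdot)$ and observe it is Sasakian --- except that you actually carry out the verification ($\xi$ Killing, $\nabla^g\xi|_H=J$ via $d\eta(X,Y)=2g(\nabla^g_X\xi,Y)$, CR-integrability automatic for a rank-$2$ distribution) that the paper leaves entirely implicit. The one discrepancy is a sign convention: you take $g|_H=\frac12\,d\eta(\cdot,J\cdot)$, following the proposition's hypothesis literally, whereas the paper's displayed formula and its later normalization $d\lambda=-2\omega$ correspond to $\frac12\,d\eta(J\cdot,\cdot)$, and these two differ by a sign once $d\eta$ is $J$-invariant; this is an internal inconsistency in the paper's statement rather than a gap in your argument.
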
 
\begin{proof} Indeed, the Sasakian metric is then 
$$g:=\eta^2+\frac{1}{2}d\eta(J\cdot, \cdot).$$\end{proof}

Recall that in the classification of Saskian structures on compact
$3$-manifolds \cite{lck}, the first geometrical information that we were able to
give was regarding the {\em pre-Sasakian structure}, in particular on
the Reeb vector field $\xi$. Once the Sasakian structure was given,
it was possible to deform the contact form, by keeping the same
pre-Sasakian structure, in order to get a locally homogeneous metric
(this deformation of the $CR$ structure was called {\em of second
  type}, while the {\em deformations of the first type} consisted in
changing the metric while keeping the same underlying $CR$ structure;
globally, these deformations only occur on the sphere $S^3$ or some of
its quotiens, \cite{lck}, \cite{cr}, \cite{s3}).

By analogy we define:
\begin{defi} An {\em elliptic pre-complex structure} on a
  six-dimensional manifold $M$ is a free action of $\C$ on $M$,
  generating an integrable distribution by $2$-planes $D$, and
  a $\C$-invariant complex {\em integrable} structure $J^D$ on $TM/D$,
  i.e., $(J^D)^2=-I$ on $TM/D$. $J^D$ is
  called integrable if the Nijenhus tensor
$$4N^{J^D}(X,Y):=[J^DX,J^DY]-J^D[J^DX,Y]-J^D[X,J^DY]-(J^D)^2[X,Y]$$
has values in $D$, for any vector fields $X,Y$ on $M$. Here, $J^D$ is
extended to an endomorphism of $TM$ by being zero on $D$.
\end{defi}
In case $M$ is the total space of an elliptic holomorphic fibration
over a complex manifold, the {\em elliptic pre-complex structure}
contains the information regarding the elliptic fibers, the fibration
itself (in the smooth category), and the complex structure on the base
manifold. It is not clear if, for a given such structure, a compatible
complex structure on $M$ exists, inducing the given elliptic
pre-complex structure. In this paper, the considered elliptic
pre-complex structures are always induced by some integrable  complex
structures.

We will show that, if $(M^6,J,g)$ is a compact, generalized
Calabi-Eckmann hermitian manifold of generic type (i.e., it is locally
isometric to a {\em modified} Sasakian product, see above), then its elliptic
pre-complex structure is locally homogeneous:

\begin{thm}\label{th1} Let $(M^6,J,g)$ be a compact, generalized
Calabi-Eckmann hermitian manifold of generic type and suppose (after
parallel modification) that its universal
covering $\tl M$ is isometric to a Riemannian product of two Sasakian
$3$-manifolds $M_1$ and $M_2$. 
Denote by $\xi_i$, $i=1,2$, the
Reeb vector fields of these Sasakian structures, and let
$E:=\R\xi_1\op\R\xi_2$, and $J^E$ be the induced complex structure on
$TM/E$ (this complex structure is $\xi_i$-invariant, $i=1,2$).

Then there is another hermitian structure $(J',g')$ on $M$, which is
obtained from $(J,g)$ through an isotopy of generalized Calabi-Eckmann
structures of generic type,  and such that $(\tilde M^6,J',g')$ is a
Lie group $G$ and $(J',g')$ is a left-invariant hermitian
structure. If both $M_1,M_2$ are non-compact, then the isotopy can be
chosen to preserve the pre-elliptic complex structure.

Moreover, the Lie group $G$ is the product $G=G_1\times G_2$, where
$G_i\in\{SU(2),\tsl,\mbox{Nil}_3\}$, and the
fundamental group $\pi_1(M)$ acts on $G$ by automorphisms of the
Sasakian product (this group is $8$-dimensional).
\end{thm}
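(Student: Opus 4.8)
The plan is to reduce the global classification to the (already known) classification of compact Sasakian $3$-manifolds from \cite{lck}, exploiting the fact that a generic GCE sixfold is, after a parallel modification, locally a product $N_1\times N_2$ of Sasakian $3$-manifolds, with the foliations $D_1=\R\xi_1\oplus H_1$ and $D_2=\R\xi_2\oplus H_2$ being $\nabla'$-parallel (hence holonomy-invariant) and $g'$-orthogonal. First I would pass to the universal cover $\tl M\cong M_1\times M_2$ and record that the deck group $\pi_1(M)$ acts by isometries preserving both the product decomposition and the Reeb fields $\xi_1,\xi_2$ (it preserves the parallel distributions $D_i$ and the parallel vector fields $\xi_i$ up to the obvious finite ambiguity, which on a connected manifold is resolved by continuity); this already gives that $\pi_1(M)$ acts by automorphisms of the Sasakian product, and counting dimensions of $\mathrm{Aut}(N_1)\times\mathrm{Aut}(N_2)$ — each Sasakian $3$-manifold having a $4$-dimensional automorphism group in the relevant cases — yields the stated $8$-dimensional group.

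Next I would invoke the structure theory of compact Sasakian $3$-manifolds developed in \cite{lck}: each compact quotient of such an $M_i$ is finitely covered by one of $S^3$, $\tsl$, or $\mathrm{Nil}_3$ (equivalently, $SU(2)$, $\widetilde{PSL}_2(\R)$, or the Heisenberg group) with a left-invariant Sasakian structure, and — crucially — the \emph{deformation} statement from \cite{lck}: any Sasakian structure on such a manifold can be connected, through an isotopy fixing the pre-Sasakian structure (i.e.\ the Reeb foliation and the transverse complex structure), to a locally homogeneous one; when $M_i$ is non-compact one may even keep the transverse $CR$ data fixed. Applying this on each factor simultaneously gives an isotopy $(g_t,J_t)$ of the product, which by construction is a parallel deformation of GCE structures of generic type (Proposition \ref{pardef} guarantees that modifying each factor keeps the torsion parallel and the Lee distribution intact, so genericity $\o_-\neq0$ is preserved along the path), ending at a left-invariant hermitian structure $(g',J')$ on $G=G_1\times G_2$. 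The preservation of the pre-elliptic complex structure in the non-compact case follows because the transverse $CR$ structures on the $N_i$ assemble, via Proposition \ref{bas} and its modifications, precisely into $J^E$ on $TM/E$.

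The main obstacle will be the bookkeeping at the level of $\pi_1(M)$: a priori $\pi_1(M)$ need only preserve the \emph{unordered} pair of foliations $\{D_1,D_2\}$ and could in principle swap the two Sasakian factors, or act on each $N_i$ by maps that are Sasakian automorphisms only up to the sign ambiguity $\xi_i\mapsto\pm\xi_i$ inherent in a parallel line field. I would rule out swapping by observing that $\o_+$ and $\o_-$ have distinct roles — $\o_+$ is exact as $d\eta$ with $\eta$ the (twice the negative $J$ of the) Lee form, while the eigenvalue data $a_1^\pm\gtrless a_2^\pm$ distinguish $H_1$ from $H_2$ canonically once the Lee line $E$ is fixed — so any element of $\pi_1(M)$ preserving $E$ and the hermitian structure must preserve each $D_i$; and the $\pm\xi_i$ ambiguity is killed because $\xi_i$ is determined as a \emph{global} parallel section of $D_i$ up to scale, with the scale fixed by $|\xi_i|=1$, leaving only $\pm1$, which is then pinned down by orientation/continuity on the connected manifold $M$. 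A secondary technical point is checking that the isotopy from \cite{lck} on each factor, which a priori is only known to keep the structure Sasakian, lifts to a \emph{parallel} deformation in the sense of Definition after Proposition \ref{pardef} — but this is exactly the content of Proposition \ref{pardef} applied with the $a_i$ and the transverse data varying, so no genuinely new input is required beyond the surface classification already in hand.
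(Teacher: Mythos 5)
Your reduction to the classification of \emph{compact} Sasakian $3$-manifolds from \cite{lck} is the step that fails. The de Rham factors $M_1,M_2$ of the universal cover are in general non-compact, and the group $\G_i$ acting on $M_i$ is only the \emph{projection} of $\pi_1(M)$ onto the isometry group of the $i$-th factor: it need not be discrete, need not act properly discontinuously, and need not act freely, so there is in general no compact Sasakian quotient $M_i/\G_i$ to which the surface-case results could be applied. (The example at the end of the paper, with $\tl M=Nil^3\x Nil^3$ and a deck group that is not a product of lattices and whose projections contain Reeb translations, shows this is not a hypothetical worry.) The paper explicitly flags this point (``in the second case we cannot apply the results about the Sasakian structures on compact $3$-manifolds\dots but we will retrieve them''), and consequently the bulk of the actual proof consists of re-deriving the structure theory for complete, non-compact, \emph{cocompact} (with respect to a possibly non-discrete automorphism group), simply connected Sasakian $3$-manifolds: showing via the frame-bundle argument and long exact homotopy sequences that the Reeb flow is closed in the automorphism group and acts freely and properly, that the orbit space $S_i$ is a simply connected non-compact Riemann surface ($\C$ or $\H$, with $S^2$ excluded by a curvature/exactness argument), classifying the possible cocompact isometry groups of a conformally flat metric on $\C$ or $\H$ (Lemmas \ref{l2} and \ref{l3}), and then solving the equivariant equation $d\a=(c-f)\o_0$ (Lemma \ref{l0}) to deform the contact form to one of constant transverse curvature while keeping the pre-Sasakian structure and the group invariance. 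None of this is supplied by, or reducible to, the compact classification you invoke.

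Your secondary points --- that the resulting isotopy is a parallel deformation in the sense of Proposition \ref{pardef}, and that $\pi_1(M)$ cannot swap the factors or flip the $\xi_i$ --- are reasonable and consistent with what the paper does, but they do not address the real difficulty above. Likewise, your claim that the homogenization preserves the pre-elliptic complex structure in the non-compact case is exactly what Lemma \ref{l0} is designed to deliver (it changes only the contact form $\l\mapsto\l+\a$, keeping the Reeb fields and the transverse complex structure), but you have not proved it: the existence of the invariant potential $\a$ is a nontrivial equivariant problem whose solution depends on which case of Lemmas \ref{l2} and \ref{l3} occurs, and this is where the actual work of the theorem lies.
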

\begin{rem} In some cases (if one of the factors $G_i$ is compact), we
  can show that, after passing to a finite covering, the fundamental
  group $\pi_1(M)$ is actually a cocompact lattice of $G$, which is a
  subgroup of codimension $2$ in the group of automorphisms of the
  structure, but it is unknown whether this fact holds in
  general.\end{rem}
\begin{proof}
Consider $\tilde M$ the universal covering of $M$, and lift the metric
$g$ to $\tl M$, which becomes a complete Riemannian manifold, on which
the de Rham decomposition theorem implies
$$(\tl M,g)=(M_1,g_1)\x (M_2,g_2),$$
where $(M_i,g_i)$, $i=1,2$ are complete, {\em cocompact} Sasakian
$3$-manifolds.


Here, {\em cocompact} means that there exists a compact set
$K_i\subset M_i$ and a group of isometries $\G_i$ of $M_i$ such that 
$$M_i=\bigcup_{\c\in\G_i}\c K_i,$$
i.e., the whole manifold can be reconstructed by the action of the group
$\G_i$ of isometries, by replicating a compact set $K_i$.

Note that $\tl M$ is cocompact with respect to $\pi_1(M)$, and thus
$M_i$ is cocompact w.r.t. $\G_i$, the projection of $\pi_1(M)$ on the
factor $M_i$ in the de Rham decomposition of $\tl M$. 
Because $\tl M$ is simply connected, so are its factors $M_i$, $i=1,2$

We want to characterize the Sasakian structures on $M_i$, $i=1,2$. We
have two situations:
\bi\item $M_i$ is compact (and simply connected), hence $(M_i.g_i)$ is
one of the Sasakian structures on the sphere $S^3$, see \cite{lck},\cite{s3}. 
\item $M_i$ is not compact, but still cocompact.\ei
In the second case we cannot apply the results about the Sasakian
structures on {\em compact} $3$-manifolds in \cite{lck},
\cite{cr}, \cite{s3}, but we will retrieve them.

Consider $P_i$ the circle bundle over $M_i$ consisting of the unit
vectors in the complex distribution $H_i$, orthogonal to  the Reeb
vector field $\xi_i$. Therefore, we have the following principal fiber
bundle:
\be\label{fibr}S^1\ra P_i\ra M_i.\ee

The group $G_i$ (containing $\G_i$) of Sasakian automorphisms of
$M_i$, i.e., the group of isometries of $(M_i,g_i)$ preserving
$\xi_i$, can thus be realized as the group of automorphisms of the
canonical parallelization of $P_i$ \cite{kob}, \cite{ball}. $G_i$ is
thus a Lie group and it can be realized as a closed (embedded)
submanifold of $P_i$ by any of its orbits, \cite{ball},
\cite{kob}. Moreover, the action of $G_i$ on $P_i$ is free and proper,
thus $G_i\backslash P_i$ is a manifold, basis of a $G_i$-principal
bundle (of which the total space is $P_i$).

It is equally well-known \cite{ball}, \cite{kob}, that a subgroup of
$G_i$ is closed iff its orbits in $P_i$ are closed. This, in return,
is equivalent to its orbits in $M_i$ being closed, because the fibers of
$P_i$ over $M_i$ are compact.

We consider the group $\Phi_i$ generated by the flow of $\xi_i$. The
closure of $\Phi_i$ in $G_i$ is either\bi
\item $\Phi_i$ itself, $\Phi_i$ being then a circle or a real line;
\item an abelian group of dimension at least $2$\ei
We begin with the second case: as the only abelian Lie groups that
admit a dense subgroup of dimension $1$ are tori, we conclude that we
have the following principal fibration, with fiber $T^k$, the
$k$-dimensional torus that is the closure in $G_i$ of $\Phi_i$:
\be\label{fibr1}
T^k\ra P_i\ra S_i,\ee
where $S_i$ is the $4-k$-dimensional quotient of $P_i$ by the torus
$T_i$. Note that by assumption $P_i$ is not compact, so neither is
$S_i$. This already excudes the case $k=4$, so we have only the cases
$k=2$ ($S_i$ is a non-compact oriented surface) or $k=3$ ($S_i$ is a
real line). In both cases $\pi_2(S_i)=0$.

We write now the long homotopy sequences corresponding to the
fibrations (\ref{fibr}) and (\ref{fibr1}):
$$\dots\ra 0=\pi_2(S_i)\ra\pi_1(T^k)\ra\pi_1(P_i)\ra\dots;$$
$$\dots\ra \pi_1(S^1)\ra \pi_1(P_i)\ra\pi_1(M_i)=0.$$
From the second line we obtain that $\pi_1(P_i)$ is a quotient of
$\Z$, but the first line implies that it must contain at least $\Z^k$,
$k\ge 2$, contradiction.

We have thus shown:
\begin{lema} On a complete non-compact, simply connected Sasakian
  $3$-manifold, the flow of the Reeb vector field is closed in the
  group of Sasakian automorphisms, and acts properly on the manifold.
\end{lema}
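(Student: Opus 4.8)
The plan is to run the whole argument on the circle bundle $P_i\to M_i$ and its Sasakian automorphism group $G_i$, using the facts recalled above: $G_i$ is a Lie group acting freely and properly on $P_i$, and a subgroup of $G_i$ is closed iff its orbits in $P_i$ (equivalently, in $M_i$) are closed. Write $A:=\overline{\Phi_i}\subseteq G_i$ for the closure of the group generated by the Reeb flow. Being the closure of the image of a one-parameter subgroup, $A$ is a connected abelian Lie subgroup of $G_i$ containing $\Phi_i$ as a dense subgroup; if $\dim A\le 1$ then $A$ is itself a one-parameter group and $A=\Phi_i$, which is the first assertion. So I would argue by contradiction, assuming $k:=\dim A\ge 2$, and derive an incompatibility with $\pi_1(M_i)=0$.

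The first step is to identify $A$ with a torus $T^k$. Writing the connected abelian Lie group as $A\cong\R^a\times T^b$, a one-parameter subgroup projects to a line $t\mapsto tv$ in the $\R^a$ factor; if $v\ne 0$ this line is properly embedded (its $\R^a$-component escapes to infinity), so the whole one-parameter orbit is closed in $A$ and hence cannot be dense unless $\dim A=1$; and $v=0$ would force $\Phi_i\subseteq\{0\}\times T^b\subsetneq A$, again impossible. Hence $a=0$ and $A\cong T^k$ with $k\ge 2$. This is the step one must handle with care: allowing a non-compact factor in $A$ looks harmless, but it is ruled out precisely because a line in $\R^a$ is never dense.

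Since $A$ is closed in $G_i$ it acts freely and properly on $P_i$, so $S_i:=A\backslash P_i$ is a manifold and one has a principal bundle $T^k\to P_i\to S_i$ with $\dim S_i=\dim P_i-k=4-k\le 2$. Because $M_i$ is non-compact, so is the circle bundle $P_i$; as the fibre $T^k$ is compact, $S_i$ is a connected non-compact manifold, hence an open curve or open surface, and in either case $\pi_2(S_i)=0$ (its universal cover is contractible). The homotopy exact sequence of $T^k\to P_i\to S_i$ then gives an injection $\Z^k=\pi_1(T^k)\hookrightarrow\pi_1(P_i)$. On the other hand $M_i$, being a de Rham factor of the simply connected $\tl M$, is simply connected, so the homotopy exact sequence of $S^1\to P_i\to M_i$ shows that $\pi_1(P_i)$ is a quotient of $\pi_1(S^1)=\Z$, hence cyclic. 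A cyclic group has no subgroup isomorphic to $\Z^k$ for $k\ge 2$; this contradiction forces $A=\Phi_i$, i.e. the Reeb flow is closed in $G_i$.

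It remains to record properness on $M_i$: the Sasakian automorphism group $G_i$ is a closed subgroup of the isometry group of the complete Riemannian manifold $M_i$, which acts properly on $M_i$; therefore $G_i$, and with it its now-closed subgroup $\Phi_i$, acts properly on $M_i$. The main obstacle in the whole argument is the torus identification of the second paragraph together with the homotopical incompatibility it produces — this is where simple connectivity of $M_i$, through the cyclicity of $\pi_1(P_i)$, does the decisive work.
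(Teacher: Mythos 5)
Your proposal is correct and follows essentially the same route as the paper: pass to the unit circle bundle $P_i$ with its Sasakian automorphism group $G_i$, identify the closure of the Reeb flow as a torus $T^k$ when its dimension is at least $2$, and derive a contradiction by comparing the long exact homotopy sequences of $T^k\to P_i\to S_i$ and $S^1\to P_i\to M_i$, using $\pi_1(M_i)=0$. Your write-up in fact supplies two details the paper only asserts — the $\R^a\times T^b$ argument showing the closure must be a torus, and the explicit deduction of properness from closedness in the isometry group — but the strategy and the decisive homotopical contradiction are the same.
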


We have thus to distinguish between two cases\bi\item $\Phi_i\simeq\R$
\item $\Phi_i\simeq S^1$.\ei
In the first case, $\Phi_i$ acts freely on $P_i$, but not necessarily
on $M_i$. In the second case, the periods of the orbits of $\Phi_i$ on
$M_i$ could be, in principle, a quotient of the period of
$\Phi_i\simeq S^1$ by a natural number. The following lema shows that
this does not happen:

\begin{lema} Let $M_i$ be a non-compact, cocompact (w.r.t. the
  Sasakian automorphism group) complete Sasakian
  $3$-manifold. Then if $\phi_t$ has a fixed point, then it is the
  identity.\end{lema}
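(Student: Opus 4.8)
The plan is to argue by contradiction: suppose $\phi_{t_0}$ fixes a point $p \in M_i$ for some $t_0 \neq 0$, and show this forces $\phi$ to be periodic, contradicting the previous lemma (which shows $\Phi_i \simeq \R$ when $M_i$ is non-compact and simply connected — or at least that the flow is closed and proper). First I would use that $\xi_i$ is a Killing field, so $\phi_{t_0}$ is an isometry fixing $p$; hence $\phi_{t_0}$ is determined by its differential $(d\phi_{t_0})_p \in O(T_pM_i)$. Since $\phi_{t_0}$ preserves the Sasakian structure, it preserves $\xi_i(p)$ and acts on the complex line $H_i(p)$ as a rotation (it commutes with $J^{H_i}$ and is orthogonal, so it is rotation by some angle $\alpha$).

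The key step is then a rigidity/analyticity argument: an isometry of a (real-analytic) Riemannian manifold is determined by its $1$-jet at a point, so $\phi_{t_0} = \mathrm{id}$ as soon as $(d\phi_{t_0})_p = \mathrm{id}$. Thus it suffices to show $\alpha = 0$. I would compute $(d\phi_t)_p$ along the Reeb flow: since $p$ is a fixed point of $\phi_{t_0}$ and $\xi_i$ generates $\phi_t$, the curve $t \mapsto (d\phi_t)_p$ lands in the stabilizer-type subgroup and the rotation angle on $H_i(p)$ evolves linearly in $t$ (the covariant derivative $\nabla \xi_i$ restricted to $H_i$ equals $J^{H_i}$ for a Sasakian structure, so the holonomy-type rate is fixed). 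Hence $\phi_t$ itself would be $2\pi/|\text{rate}|$-periodic if $\alpha \neq 0$ forced a rational relation — more carefully, $\phi_{t_0}$ acting as rotation by $\alpha \neq 0$ on $H_i(p)$ and as identity on $\R\xi_i(p)$ means $(d\phi_{t_0})_p$ has finite order only if $\alpha \in 2\pi\Q$, but in any case $\phi_{N t_0}$ for suitable $N$ (or a limit) produces a non-trivial isometry arbitrarily close to the identity unless the orbit through $p$ is already closed — and a closed orbit through one point, by homogeneity of the flow's behavior and the fact that $\xi_i$ has constant length, forces all orbits to be closed, i.e. $\Phi_i \simeq S^1$, contradicting $\Phi_i \simeq \R$.

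More cleanly: if $\phi_{t_0}(p) = p$ with $t_0 > 0$ minimal, then the orbit of $p$ is a closed circle of circumference proportional to $t_0$; I would show that the set of points whose $\Phi_i$-orbit is closed is open (by the constant-length Killing property and continuity of orbit periods — periods vary continuously and a closed orbit has a tubular neighborhood of closed orbits) and closed, hence all of $M_i$; then $\Phi_i$ would act with all orbits circles, and since $M_i$ is simply connected the quotient map $M_i \to M_i/\Phi_i$ would be a (Seifert-type) fibration forcing $\Phi_i$ compact, i.e. $\Phi_i \simeq S^1$, contradicting the first lemma. The contradiction shows no such $t_0$ exists unless $\phi_{t_0} = \mathrm{id}$. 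Actually, one must be slightly careful: the statement allows $\phi_{t_0} = \mathrm{id}$, so what we really prove is that a fixed point forces periodicity, which is the stated conclusion.

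The main obstacle I expect is controlling the \emph{period function} near a closed orbit: one needs that if one orbit of the Reeb flow is closed (a circle), then nearby orbits are also closed with nearby periods, so that the locus of closed orbits is open. This is where the Sasakian structure is essential — $\xi_i$ is Killing of constant length, the flow preserves a contact form, and $\nabla_X \xi_i = -J^{H_i}X$ on $H_i$ shows the linearized flow along a closed orbit is a fixed rotation, so its return map to a transverse disk is a rotation of finite order near the orbit only in degenerate cases; the Poincaré return map being a rotation (an isometry of the transverse slice, by the Killing property) is what guarantees openness of the closed-orbit locus. Once that topological input is in place, the homotopy-sequence contradiction from the previous lemma finishes the argument, and real-analyticity of isometries handles the reduction to the $1$-jet.
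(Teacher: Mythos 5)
Your reduction to the $1$-jet is fine and matches the paper's opening move: $\phi_{t_0}$ is an isometry fixing $p$ and preserving $\xi_i$, so it is the identity iff its differential is trivial on the contact plane $H_i(p)$, where it acts as a rotation by some angle $\alpha$. But the way you try to force $\alpha=0$ has two genuine gaps. First, the openness of the closed-orbit locus is false in the generality you invoke: the Poincar\'e return map of the closed orbit through $p$ is precisely the rotation by $\alpha$ on a transverse slice, and if $\alpha$ is an irrational multiple of $2\pi$ this rotation has no periodic points near the origin, so no nearby orbit closes (compare a left-invariant Killing field of irrational slope on $SU(2)$, whose closed orbits form two isolated circles --- a closed, not open, set). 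Openness would follow only if the return map were trivial, which is exactly what you are trying to prove, so this step is circular. Second, even granting that all orbits are closed, the conclusion ``hence $\Phi_i\simeq S^1$, contradicting the first lemma'' is not a contradiction: the preceding lemma only shows that $\Phi_i$ is closed in the automorphism group and acts properly, and the paper explicitly retains $\Phi_i\simeq S^1$ as one of the two admissible cases afterwards. In the circle case the whole content of the present lemma is that no orbit can have period strictly smaller than the generic one, and your argument says nothing about that. That this requires a real input is shown by the weighted Sasakian structures on $S^3$, where the Reeb flow is periodic but the exceptional orbits have shorter periods, so some $\phi_{t_0}$ has fixed points without being the identity; the hypothesis that $M_i$ is non-compact but cocompact is therefore essential, and your proposal never uses it. (The preliminary remark about the rotation angle of $(d\phi_t)_p$ ``evolving linearly in $t$'' also does not parse: $(d\phi_t)_p$ is an endomorphism of $T_pM_i$ only for those $t$ with $\phi_t(p)=p$.)

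The paper's proof deploys cocompactness exactly at this point: if $\phi_t\ne\mathrm{id}$ fixes $x$, then its fixed points near $x$ lie only on the orbit $\Phi_i x$, since $d\phi_t$ is a nontrivial rotation of $H_i(x)$; cocompactness replicates the closed orbit and produces a second, disjoint orbit $\Phi_i y$ also closing at time $t$, and a minimizing geodesic between the two circles, being orthogonal to both, is forced to consist of fixed points of $\phi_t$ --- contradicting the isolation of the fixed-point set near $x$. Some such global step, exploiting the replication of the closed orbit throughout the non-compact manifold, is what is missing from your proposal.
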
 
\begin{proof}
Let $\c:=\phi_t$ and suppose $\c(x)=x$. Because $\c$ is an isometry
that fixes $x$, it 
is the identity iff its derivative at $x$ is the identity. Of course,
$\c_*\xi=\xi$, so $\c_*$ has to be a rotation in the plane $H_i$ at
$x$. If $\c_*\ne I$, then $\c(y)\ne y$ fo any $y$ close to $x$, but
not on the orbit of $\Phi_i$ through $x$. In other words, if $\c$ is
not the identity, the neigboring orbits of $\Phi_ix$ do not close at
time $t$.

We know, on the other hand, that, although $M_i$ is itself not
compact, there is a compact set $K_i$, 
sufficiently large, such that $M_i$ is the (infinite) union of sets
isometric to $K_i$. If we consider $K_i$ to contain the closed orbit
$\Phi_ix$, we conclude that there are (infintely many, hence) at least
two at time $t$ closed orbits of $\Phi_i$ on $M_i$, say $\Phi_ix\ne
\Phi_iy$. But these two circles can be joined by a minimizing geodesic
in the complete manifold $M_i$, and this minimizing geodesic must be
orthogonal to both circles. The flow of $\xi$ generates, thus, a set
of such minimizing geodesics between $\Phi_ix$ and $\Phi_iy$. It turns
out that every point on such a geodesic is a fixed point for
$\c=\phi_t$. In conclusion all the points on a geodesic through $x$,
orthogonal to $\xi_x$, are fixed by $\c$, contradiction.
\end{proof}

We apply the lemma as follows: if $\phi_t$ has a fixed point, then
$\Phi_i$ is $t$-periodic. This rules out the case when
$\Phi_i\simeq\R$ acts properly, but not freely on $M_i$. If $\Phi_i$
is a circle, then the Lemma shows that the period of $\Phi_i$ is the
minimal $t>0$ such that $\phi_t$ has a fixed point on
$M_i$. Therefore, $\Phi_i$ acts freely on $M_i$ in this case as well.

We have shown:

\begin{lema} On a complete, non-compact, cocompact, simply-connected
  Sasakian $3$-manifold, the Reeb orbits form a principal fiber bundle
  over a Riemann surface $S_i$.
\end{lema}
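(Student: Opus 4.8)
The plan is to exploit what has already been established in this section: the flow $\Phi_i$ of the Reeb field $\xi_i$ is closed in the group of Sasakian automorphisms, it acts properly on $M_i$, and — by the two fixed-point lemmas above — it acts \emph{freely}. Hence $\Phi_i$ is a one-dimensional Lie group, either $\R$ or $S^1$, acting freely and properly on $M_i$. First I would invoke the quotient manifold theorem for free proper Lie group actions: it produces a smooth (Hausdorff, second-countable) manifold $S_i:=M_i/\Phi_i$ of dimension $3-1=2$ and exhibits the projection $\pi\colon M_i\to S_i$ as a principal $\Phi_i$-bundle whose fibers are precisely the Reeb orbits. This already yields the ``principal fiber bundle'' part of the statement.

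It then remains to put a Riemann surface structure on $S_i$. For this I would use the Sasakian data directly. By definition of a Sasakian structure, the complex structure $J$ on the contact distribution $H_i=\xi_i^\perp$ is invariant under the flow of $\xi_i$; equivalently, the induced complex structure on $TM_i/\R\xi_i$ is $\Phi_i$-invariant. Since $\ker d\pi=\R\xi_i$ and $d\pi$ maps $H_i$ isomorphically onto $TS_i$ along each orbit, the $\Phi_i$-invariance lets $J$ descend to a well-defined almost complex structure on $S_i$. In real dimension $2$ there is no Nijenhuis obstruction, so this almost complex structure is integrable and $S_i$ is a Riemann surface (in particular orientable). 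Together with the previous paragraph, $M_i\to S_i$ is the asserted principal bundle over a Riemann surface.

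The only place where some care is needed — and the nearest thing to an obstacle — is verifying the hypotheses of the quotient manifold theorem; but freeness and properness were precisely the content of the first lemma of this section together with the two fixed-point lemmas, so no new analysis is required, and one need only use ``proper'' in the sense that guarantees a Hausdorff, manifold quotient. The descent of $J$ and the dimension count are then routine. I do not expect any genuine difficulty remaining: all the substantive work went into establishing that $\Phi_i$ acts freely and properly.
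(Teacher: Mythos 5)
Your proof is correct and takes essentially the same route as the paper: there the lemma is recorded with ``We have shown'' as an immediate consequence of the two preceding lemmas (closedness and properness of the Reeb flow, plus the fixed-point lemma yielding freeness), so the principal-bundle structure and the quotient surface arise exactly as in your argument. The only cosmetic difference is that the paper endows $S_i$ with its Riemann surface structure via the induced metric and orientation rather than by descending $J$ from the contact distribution; for a Sasakian structure these are equivalent.
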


We want now to show that the Riemann surface is non-compact and simply
connected.

For this, we use again the long homotopy sequence for the fibration
$$\Phi_i\ra M_i\ra S_i,$$
and obtain
$$\dots\ra\pi_2(S_i)\ra\pi_1(\Phi_i)\ra\pi_1(M_i)=0.$$
If $S_i$ is not the sphere, then $\pi_2(S_i)=0$, hence
$\pi_1(\Phi_i)=0$ and $\Phi_i$ is a real line.

On the other hand, the same homotopy sequence shows that
$\pi_1(S_i)\simeq \pi_0(\Phi_i)=0$, so $S_i$ is simply connected. The
metric on $M_i$ induces a metric on $S_i$, in particular $S_i$ is a
simply-connected complex curve, therefore\bi
\item $S_i\simeq S^2$ and $\mbox{Isom}(S_i)\subset SO(3)$;
\item $S_i\simeq \C$ and $\mbox{Isom}(S_i)\subset \C\rtimes \C^*$, the
  group of complex automorphisms of $\C$;
\item $S_i\simeq \H:=\{z\in\C\ | \ \mbox{Im}z>0\}$, the upper
  half-plane in $\C$ and $\mbox{Isom}(S_i)\subset PSL(2,\R)$ the group
  of complex automorphisms of $\H$.\ei 

We want to exclude the case when $S_i\simeq S^2$; indeed, if this
holds, then $\Phi_i$ must be a line, otherwise $M_i$ is compact. But
then, $M_i$ is a trivial bundle over the sphere, and this cannot have
a Sasakian structure adapted to the fibration since the connection in
this principal bundle has a non-exact curvature (a multiple of the
volume form of $S_i$), contradiction.

Therefore $S_i$ is either biholomorphic to $\C$ or to $\H$. Since both
are contractible, $M_i$ is homotopically equivalent to
$\Phi_i$, thus the latter has to be simply connected, hence isomorphic
to $\R$.

We recall that $M_i$ admits a cocompact action by Sasakian
automorphisms. Therefore, the Riemann surface $S_i$ admits a cocompact
action by isometries, in particular by biholomorphisms. Let us denote
by $\G^S_i$ the group of (induced) biholomorphisms of $S_i$. 

\begin{lema}\label{isom} Let $N\ra S$ be an $\R$ principal bundle over a
  contractible Riemann surface $S$, endowed with a connection form
  $\l\in \L^1(N)$ whose curvature $d\l$ is minus twice the K\"ahler
  form $\o$ on $S$. Then every isometry of $S$ admits
  a lift as a Sasakian automorphism of $N$.\end{lema}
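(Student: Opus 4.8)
The plan is to reduce the lifting problem to the vanishing of $H^1(S)$, after putting the bundle and the connection in a normal form. Since the structure group $(\mathbb{R},+)$ is contractible, the principal bundle $\pi\colon N\to S$ admits a global section, and choosing one identifies $N$ with $S\times\mathbb{R}$ in such a way that the fundamental vector field of the $\mathbb{R}$-action is $\partial_t$; up to a constant this is the Reeb field $\xi$ of the Sasakian structure on $N$. In these coordinates $\lambda-dt$ is horizontal and invariant, hence basic, so $\lambda=dt+\pi^*\alpha$ for a unique $\alpha\in\Lambda^1(S)$, and the curvature hypothesis becomes $d\alpha=-2\omega$. Using the formula $g=\eta^2+\tfrac{1}{2}d\eta(J\cdot,\cdot)$ for the Sasakian metric together with $\tfrac{1}{2}d\lambda(J\cdot,\cdot)=-\omega(J\cdot,\cdot)=g_S$ on $\ker\lambda$, the metric of $N$ is $g_N=\lambda^2+\pi^*g_S$, and the complex structure on the contact distribution $H=\ker\lambda$ is the pull-back of $J_S$ via the isomorphism $d\pi|_H$.

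Now let $f$ be an isometry of $(S,g_S)$. An isometry of a Riemann surface is holomorphic or anti-holomorphic; since a Sasakian automorphism of $N$ preserves $\xi$ and hence the $J$-orientation of $H$, it can only cover an orientation-preserving map, so the relevant case is $f$ holomorphic, whence $f^*\omega=\omega$. I would then look for a lift of the form $\Psi(x,t):=(f(x),\,t+h(x))$ for a function $h\colon S\to\mathbb{R}$ to be determined. Such a $\Psi$ automatically satisfies $\pi\circ\Psi=f\circ\pi$ and $\Psi_*\partial_t=\partial_t$, so it preserves the Reeb field; because $f$ is a $g_S$-isometry and $J_S$-holomorphic, $\Psi$ preserves $g_N$ and the complex structure on $H$ as soon as $\Psi^*\lambda=\lambda$. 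Since $\Psi^*\lambda=dt+dh+\pi^*f^*\alpha$, this amounts to solving $f^*\alpha-\alpha=-dh$.

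The one-form $f^*\alpha-\alpha$ is closed, because $d(f^*\alpha-\alpha)=f^*d\alpha-d\alpha=-2(f^*\omega-\omega)=0$. As $S$ is contractible, $H^1_{\mathrm{dR}}(S)=0$, so $f^*\alpha-\alpha=dk$ for some smooth $k\colon S\to\mathbb{R}$; putting $h:=-k$ gives the desired Sasakian automorphism $\Psi$, which is unique up to a constant translation in the fibre (the ambiguity of $k$). The contractibility of $S$ enters \emph{only} at this step, to make the closed form $f^*\alpha-\alpha$ exact; everything else is formal, and the single point meriting a word of care is the orientation issue — one works with holomorphic isometries, an anti-holomorphic one lifting instead to a map that sends $\xi$ to $-\xi$ and conjugates the CR structure.
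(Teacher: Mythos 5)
Your proof is correct and follows essentially the same route as the paper: trivialize the bundle, write the candidate lift as $(x,t)\mapsto(f(x),t+h(x))$, and reduce the condition $\Psi^*\lambda=\lambda$ to the exactness of the closed form $f^*\alpha-\alpha$ on the contractible surface $S$. Your extra remark on orientation (restricting to holomorphic isometries, since a Sasakian automorphism must preserve $\xi$ and the $J$-orientation of $H$) is a small point the paper leaves implicit, but it does not change the argument.
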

\begin{proof} $S$ being contractible, every bundle is topologically
  trivial. Let $(t,z)\in\R\x \S$ be coordinates on $N$ such that
  $\xi=\d_t$ and $\o$ the K\"ahler form on $S$. 
Let $\l=dt+d\l_0$, $\l_0\in\L^1S$, be the connection form
corresponding to the Sasakian structure, hence
$$d\l_0=-2\o.$$
Let $\c:S\ra S$ be a isometry, thus $\c^*\o=\o$. We want to construct
$\tl\c:N\ra N$, such that
$$\tl\c(t,z)=(\f(t,z),\c(z)),$$
and such that $\tl\c_*\d_t=\d_t$ and $\tl\c^*\l=\l$.
The first condition implies that $\d_t\f=1$, thus we can re-write
$$\tl\c(t,z)=(t+f(z),\c(z)),$$
and therefore
$$\tl\c^*\l=d(t+f(z))+\tl\c^*\l_0.$$
$\tl\c$ is a Sasakian automorphism iff
\be\label{dfff}df=\l_0-\tl\c^*\l_0.\ee
$f$ can be determined, for the contractible surface $S$, iff the right
hand side is a closed form. But 
$$d\l_0=\o=\tl\c^*\o=\tl\c^*d\l_0,$$
therefore $d(\l_0-\tl\c^*\l_0)=0$.\end{proof}

We study now the isometry group of the basis $S$, for the two cases:
$S=\c$ and $S=\H$. The following results are classical and their
proofs elementary. The less obvious point is the case 2. from Lemma
\ref{l3}, and this follows also, for example, from \cite{moore}.
\begin{lema}\label{l2} Let $\o=f\o_0$, with $f$ a positive function,
  and $\o_0=dz_1\we dz_2$, a hermitian 
  metric on $\C$. Then $G=\mbox{Isom}(\o)$ is a closed subgroup of
  $\C\rtimes S^1=\mbox{Isom}(\o_0)$ and has the following form:\bi
\item $\dim G=0$; then $G$ is a finite extension of a lattice $L:=\Z V\x
  \Z W$ by a subgroup $G/L$ of the lattice automorphism group (which has
  $2,4$ or $6$ elements);
\item $\dim G=1$; then $G$ is an extension of $L=\Z V\op \R W$, with
  $V,W$ linearly independent, by a subgroup $G/L$ of
  $\mbox{Aut}(L)=\{\pm I\}$;
\item $G=\C\rtimes S^1$.\ei\end{lema}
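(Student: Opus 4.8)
The plan is to argue in two stages. \emph{First}, I would show that every isometry of $(\C,\o)$ is an affine isometry of the flat metric $\o_0$, i.e.\ that $G\subset\C\rtimes S^1$. \emph{Second}, I would carry out the elementary classification of closed subgroups of $\C\rtimes S^1$, using the cocompactness of the action (inherited from the cocompact Sasakian $3$--manifold upstairs) to discard the degenerate possibilities and read off the three cases.

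For the first stage, observe that an (orientation-preserving) isometry $\c$ of $(\C,\o)$ is conformal, hence a biholomorphism of the Riemann surface $\C$, hence affine: $\c(z)=az+b$ with $a\in\C^*$, $b\in\C$. Since $\c^*\o_0=|a|^2\o_0$, the invariance $\c^*\o=\o$ is equivalent to $(f\circ\c)\,|a|^2=f$. The crucial point is that this forces $|a|=1$: otherwise, after replacing $\c$ by $\c^{-1}$ if needed, we may assume $|a|<1$, so $\c$ has a fixed point $p=b/(1-a)\in\C$ and $\c^n(z)\to p$ for every $z$; but then $f(\c^n(z))=|a|^{-2n}f(z)\to\infty$, contradicting the fact that the positive smooth function $f$ is bounded near the point $p$. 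Hence $\c\in\C\rtimes S^1$, and $G$ is closed there because $\mathrm{Isom}(\o)$ is closed for the compact--open topology.

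For the second stage, set $G_0:=G\cap\C$ (a normal subgroup of $G$) and let $\pi\colon\C\rtimes S^1\to S^1$ be the rotational projection; then $\pi(G)\cong G/G_0$ is a closed subgroup of $S^1$ (so all of $S^1$, or finite cyclic), and conjugation in $G$ shows $\pi(G)$ acts on $G_0$ by rotations preserving it. Now run through the closed subgroups of $\C\cong\R^2$. If $\pi(G)=S^1$, the only rotation-invariant ones are $G_0=\{0\}$ — giving $G\cong S^1$, which does not act cocompactly — and $G_0=\C$, which forces $f$ constant and $G=\C\rtimes S^1$, case 3. If $\pi(G)$ is finite, then $\dim G=\dim G_0$, and cocompactness forces $G_0$ to be a full lattice $L=\Z V\oplus\Z W$ (case 1) or $L=\Z V\oplus\R W$ with $V,W$ linearly independent (case 2); a $2$--dimensional $G$ is impossible since $S^1$ preserves no line in $\C$. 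In the lattice case, the crystallographic restriction ($2\cos\theta\in\Z$ for a lattice-preserving rotation) gives that $\pi(G)$ is cyclic of order $1,2,3,4$ or $6$, contained in $\mathrm{Aut}^+(L)$, which has $2$, $4$ or $6$ elements according as $L$ is generic, square or hexagonal. In the case $L=\Z V\oplus\R W$, any rotation preserving the line $\R W$ (the identity component of $L$) must be $\pm I$, and both lie in $\mathrm{Aut}(L)$, so $\pi(G)\subset\{\pm I\}$. In both cases the extension $1\to L\to G\to\pi(G)\to 1$ splits: if $g=(v,\rho)\in G$ projects to a generator $\rho$ of $\pi(G)$ of order $n\ge 2$, then $g^n=\bigl((1+\rho+\dots+\rho^{n-1})v,\,1\bigr)=e$ because $1+\rho+\dots+\rho^{n-1}=0$, so $\langle g\rangle$ is a complement to $L$.

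I expect the only genuine subtlety to lie in the first stage — deducing $|a|=1$ from the blow-up of $f$ at the fixed point — together with being careful that the configurations ruled out by cocompactness ($G$ finite; $G\cong S^1$ or $\R$; $G_0$ of rank $\le 1$) are precisely those absent from the list. The remaining ingredients (the crystallographic restriction and the splitting of the plane-crystallographic extensions) are classical and need no real work.
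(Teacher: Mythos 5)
Your argument is correct, but there is nothing in the paper to compare it against: the author explicitly declines to prove this lemma, stating only that Lemmas \ref{l2} and \ref{l3} are ``classical and their proofs elementary'' (the sole point singled out as non-obvious is case 2 of Lemma \ref{l3}, for which \cite{moore} is cited). Your two-stage argument --- first forcing $G\subset\C\rtimes S^1$ via the fixed point of an affine map with $|a|\ne 1$ and the resulting blow-up of $f$, then running through the closed subgroups of $\C\rtimes S^1$ --- is exactly the standard proof the author is alluding to, and it correctly supplies the hypothesis the printed statement suppresses, namely that $G$ acts cocompactly (without which $G$ could be trivial, finite, a point-stabilizer circle, or a rank-one translation group, none of which appear in the list). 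Two small points you assert rather than prove: (i) that $\pi(G)$ is closed in $S^1$ --- this is true but needs a line, e.g.\ if $\pi(G)$ were infinite and proper then $G_0$, being invariant under the dense group $\pi(G)$, would be $\{0\}$ or $\C$, and in either case one checks $G$ could not be closed with $\pi(G)$ properly dense; and (ii) the reduction to orientation-preserving isometries, which is justified here because $G$ consists of the biholomorphisms of $S_i$ induced by the Sasakian automorphisms upstairs, consistent with the paper's identification $\mathrm{Isom}(\o_0)=\C\rtimes S^1$. Neither affects the validity of your proof.
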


\begin{lema}\label{l3} Let $\o=f\o_0$, with $f$ a positive function
  and $\o_0=\frac{1}{z_2}dz_1\we dz_2$, a hermitian
  metric on $\H$. Then $G=\mbox{Isom}(\o)$ is a closed subgroup of
  $PSL(2,\R)=\mbox{Isom}(\o_0)$ and has the following form:\bi
\item $\dim G=0$; then $G$ is a finite extension of a fuchsian lattice
  $L:=\pi_1(S_0)$ (here $S_0:=\H/L$ is a Riemann surface of genus
  $g>1$) by a subgroup $G/L$ of the lattice automorphism group (which
  is finite);
\item $\dim G=1$; then $G$ is conjugated to 
$$G_a:=\left\{\left(\ba{cc}a^k&a^kb\\ 0&a^{-k}\ea\right)\ \left|\right.\
  k\in\Z,\ b\in\R\right\},$$
where $a>1$ is fixed;
\item $G=PSL(2,\R)$.\ei\end{lema}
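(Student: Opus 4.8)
The plan is to realise $G=\mathrm{Isom}(\o)$ as a closed, hence Lie, subgroup of $PSL(2,\R)$ and then to classify it by its dimension, using — as in the situation where the lemma gets applied inside the proof of Theorem \ref{th1} — that $G$ acts cocompactly on $\H$. Here $\mathrm{Isom}(\o)$ is read as the group of holomorphic (equivalently, orientation-preserving) isometries, i.e.\ the ones that lift Sasakian automorphisms; it is cocompactness that makes the three listed cases exhaustive. An orientation-preserving isometry of $(\H,\o)$ preserves angles, so it is a holomorphic diffeomorphism of $\H$, hence an element of $\mathrm{Aut}(\H)=PSL(2,\R)=\mathrm{Isom}(\o_0)$; and since every $\c\in PSL(2,\R)$ already preserves $\o_0$, we get $G=\{\c\in PSL(2,\R):f\circ\c=f\}$, an intersection of closed conditions, so $G$ is closed and thus a Lie subgroup of $PSL(2,\R)$ by Cartan's theorem. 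It remains to go through $\dim G\in\{0,1,2,3\}$.

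If $\dim G=3$ then $G=PSL(2,\R)$ acts transitively on $\H$, so $f$ is constant and $\o$ is a positive multiple of $\o_0$ — case 3. The value $\dim G=2$ does not occur: up to conjugacy the only connected $2$-dimensional subgroup of $PSL(2,\R)$ is the Borel subgroup $B$, acting as the affine group $z\mapsto az+b$ with $a>0$, which is already transitive on $\H$; so $f\circ\c=f$ for all $\c\in B$ forces $f$ constant, whence $G=PSL(2,\R)$, contradicting $\dim G=2$.

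If $\dim G=0$, then $G$ is discrete and acts cocompactly on $\H$, so it is a cocompact Fuchsian group. By Selberg's lemma it contains a finite-index torsion-free subgroup, and passing to its normal core yields a normal, finite-index, torsion-free cocompact Fuchsian subgroup $L\triangleleft G$. Then $S_0:=\H/L$ is a closed orientable surface whose universal cover is $\H$, hence of genus $>1$, with $L\cong\pi_1(S_0)$. Since $G$ normalises $L$ it acts on $S_0$ by holomorphic isometries of the induced metric, with kernel exactly $L$, so $G/L$ embeds into $\mathrm{Aut}(S_0)$, which is finite in genus $>1$. This is case 1.

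The case $\dim G=1$ is the delicate one flagged by the authors. The identity component $G^0$ is a connected one-parameter subgroup of $PSL(2,\R)$, hence, after conjugation, the rotation group $SO(2)$, a hyperbolic (diagonal) group $A$, or the parabolic group $P$ of translations $z\mapsto z+t$; and $G\subseteq N_{PSL(2,\R)}(G^0)$. The elliptic case is excluded because then $G\subseteq N(SO(2))=SO(2)$ would be compact, which cannot act cocompactly on the noncompact $\H$; the hyperbolic case is excluded because $G$ would lie between $A$ and $N(A)=A\rtimes\Z/2$, both of which act on $\H$ with noncompact orbit space. So, after conjugation, $G^0=P$, $G\subseteq N(P)=B$, and $G/P$ is a closed subgroup of $B/P\simeq\R_{>0}$; it is neither trivial (else $G=P$, not cocompact) nor all of $\R_{>0}$ (else $G=B$, of dimension $2$), hence $G/P=a^{\Z}$ for a unique $a>1$, which identifies $G$ with a conjugate of $G_a$ and shows that $f$ depends only on $\mathrm{Im}\,z$, periodically with period $2\log a$ in $\log\mathrm{Im}\,z$ — case 2. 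I expect this last case distinction — equivalently, the statement that a one-dimensional closed subgroup of $PSL(2,\R)$ acting cocompactly on $\H$ must be conjugate to some $G_a$ — to be the only point needing more than routine computation; it is the classical fact that can also be read off from \cite{moore}.
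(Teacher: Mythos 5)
Your proof is correct, and it supplies in full the argument that the paper itself omits: the paper merely declares Lemmas \ref{l2} and \ref{l3} to be ``classical'' with ``elementary'' proofs and refers to \cite{moore} for case 2, which is exactly the point you isolate and prove (a one-dimensional closed cocompact subgroup must have parabolic identity component and hence be conjugate to some $G_a$). Your one substantive editorial observation is also right and worth making: as literally stated the trichotomy fails (e.g.\ generic $f$ gives trivial $G$, which is not a finite extension of a cocompact Fuchsian lattice), so the cocompactness of the $G$-action on $\H$ --- available in the only context where the lemma is invoked, since $S_i$ carries a cocompact isometric action induced from $\G_i$ --- must be read into the hypotheses; with that hypothesis your case analysis by $\dim G$ (transitivity for $\dim G\ge 2$, Selberg plus $\mathrm{Aut}(S_0)$ finite for $\dim G=0$, and the elliptic/hyperbolic/parabolic trichotomy with normalizers for $\dim G=1$) is complete and is the intended classical route.
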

Now we come to the core of the argument. We want to show that we can
modify the Sasakian structure on $M_i$, leaving it $\G_i$-invariant,
such that the new Sasakian structure is homogeneous. 
\begin{lema}\label{l0} Let $N\ra\S$ be a Sasakian manifold as above
  (with $S\simeq \C$ or $\H$). There exists a constant $c>0$ and a
  $1$--form $\a\in\L^1S$ such that \bi
\item $\c^*\a=\a,\ \forall\c\in\mbox{Isom}(S,\o)$;
\item $d\a=(c-f)\o_0$.\ei
\end{lema}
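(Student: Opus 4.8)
The plan is to exhibit $\a$ explicitly as an averaged primitive of a suitable $2$-form, exploiting the fact that $S$ (being $\C$ or $\H$) is contractible, so that closed $2$-forms are exact, and that its isometry group $G=\mathrm{Isom}(S,\o)$ is either unimodular or at worst has a canonical averaging procedure coming from Lemmas \ref{l2} and \ref{l3}. First I would observe that for a suitable constant $c>0$ the $2$-form $(c-f)\o_0$ is $G$-invariant (since both $\o_0$ and $\o=f\o_0$ are, the function $f$ is a $G$-invariant function, hence $(c-f)\o_0$ is a $G$-invariant $2$-form on the surface). The issue is twofold: (i) finding the right constant $c$, and (ii) choosing the primitive $\a$ of $(c-f)\o_0$ so that it is again $G$-invariant.

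For the constant, I would argue by cases according to $\dim G$. When $G$ is discrete and cocompact, $S/G$ is a compact orbifold Riemann surface; here one must pick $c$ so that $\int_{S/G}(c-f)\o_0=0$, i.e. $c$ is the average value of $f$ over a fundamental domain, which forces $(c-f)\o_0$ to be exact \emph{equivariantly} (its class in the $G$-equivariant, or orbifold, $H^2$ vanishes), so a $G$-invariant primitive $\a$ exists by the equivariant de Rham / transfer argument (average any primitive over the finite part and use that the lattice part is handled by the vanishing period). When $\dim G\ge 1$, the group $G$ acts with $1$- or $2$-dimensional orbits; in the $2$-transitive case ($G=\C\rtimes S^1$ or $PSL(2,\R)$) homogeneity forces $f$ to be constant, so one simply takes $c=f$ and $\a=0$. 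The genuinely interesting case is $\dim G=1$: here $f$ is invariant under a $1$-parameter group together with a $\Z$-action (the $\Z V\oplus\R W$ or the $G_a$ of Lemma \ref{l3}), so $f$ descends to a function on the $1$-dimensional quotient (a circle or a line), and one chooses $c$ so that the averaged $2$-form $(c-f)\o_0$ has vanishing "flux" across the relevant cycle, then integrates.

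The mechanism for producing the \emph{invariant} primitive, once $c$ is chosen correctly, is: take any primitive $\a_0$ with $d\a_0=(c-f)\o_0$ (exists by contractibility of $S$); then for $\c\in G$, $\c^*\a_0-\a_0$ is closed, hence exact, $=dh_\c$ for a function $h_\c$ on $S$ unique up to a constant; the assignment $\c\mapsto h_\c$ is (after fixing normalizations) a cocycle, and the obstruction to killing it is precisely a class in $H^1(G;\,C^\infty(S)/\mathrm{const})$ or, concretely, a period. For the compact-quotient and $\dim G=1$ cases the correct choice of $c$ makes this period vanish, and then averaging $\a_0$ over $G$ — using either invariant measure on the compact part of $G$ (a torus or circle factor, or the finite group $G/L$), combined with subtracting the exact correction on the lattice/$\R$-directions — yields a $G$-invariant $\a$. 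I expect the main obstacle to be this last equivariance step in the non-compact, positive-dimensional isotropy case ($\dim G=1$), where $G$ is not compact and one cannot simply average over the whole group; there one must instead use the explicit normal form of $G$ from Lemmas \ref{l2} and \ref{l3} to build $\a$ by hand — essentially solving the transport equation (\ref{dfff})-type ODE along the orbits — and check it is invariant under the residual discrete symmetry. The other cases are comparatively routine once the averaging/period computation is set up.
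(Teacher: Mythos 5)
Your proposal follows essentially the same route as the paper: the same case division by $\dim G$ using Lemmas \ref{l2} and \ref{l3}, the same normalization of $c$ as the mean of $f$ over a fundamental domain (with averaging over the finite group $G/L$) in the cocompact-lattice case, $c=f$ and $\a=0$ in the homogeneous case, and an ODE along the orbits plus a check of invariance under the residual discrete symmetry when $\dim G=1$. The only minor divergence is that for $S=\H$ with $\dim G=1$ the paper shows no flux condition on $c$ is needed (the affine equation $\b'-\b=c-f$ always admits a periodic solution since the homogeneous solutions are multiples of $e^y$), but this does not affect correctness.
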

\begin{proof} We use the description of the possible isometry groups
  as given in the Lemmas \ref{l2} and \ref{l3}. In the first case of
  both lemmas, $S/L$ is a compact Riemann surface, and all
  $G$-invariant data on $S$ is equivalent to corresponding $G/L$-invariant
  data on $S/L$. The existence of an $L$-invariant $\a$ as required is
  equivalent to
$$\int_{S/L}(c-f)\o_0=0,$$
which determines $c>0$. On the other hand, if $\a\in\L^1(S/L)$
  satisfies
\be\label{dalf}d\a=(c-f)\o_0,\ee
then $\c^*\a$ also satisfies the same equation, for any $\c\in
G/L$. Therefore,
$$\a_o:=\left(\sum_{\c\in G/L}\c^*\a\right)\bigg{/}|G/L|,$$
where $|G/L|$ is the number of elements of $G/L$, is a
$G/L$--invariant $1$--form satifying (\ref{dalf}), thus $\a_0$ induces
on $S$ a $1$--form with the required properties.
\medskip

In the case 3. of both lemmas \ref{l2} and \ref{l3}, the function $f$
is constant, thus we trivially set $c:=f$ and $\a:=0$.
\medskip

The remaining case is when $\dim G=1$, and we will treat the case
$S=\C$ and $S=\H$ distinctly.

Let $S=\C$. We choose a coordinate system on $\C$ such that $V\in\R$
and $W=iw$, $w>0$ (with the notations of Lemma \ref{l2}). The
$G$-invariance of $f$ implies that $f$ depends only on $z_2$ and is
$w$--periodic. 

We write
$\a:=\a_1dz_1+\a_2dz_2$ and, because of the required $G$-invariance of
$\a$, we conclude that $\a_1,\a_2$ should depend on $z_2$ alone (and
they should be $w$ --periodic); as for the condition
(\ref{dalf}) the component $\a_2dz_2$ is irrelevant, we will set it to
be zero.

In this setting, the claim is equivalent to: there exists a constant
$c>0$ and a function $\a_1:\R\ra\R$ such that
\be\label{simplC}\ba{rcl}
\a_2(z_2+w)&=&\a_2(z_2)\\
-\a_2'&=&(c-f).\ea\ee
A solution (hence all) of the second line satisfies the first line iff 
$$c=\frac{1}{w}\int_0^wf(t)dt.$$
This proves the claim for $S=\C$ and $G=\R+iw\Z$. To prove it for the
extension of this group by $\{\pm 1\}$, we take the mean value between
a solution $\a_2$ of (\ref{simplC}) and $t\ra\a_2(-t)$ (as we
did for the case when $\dim G=0$).
\medskip

Let now $S=\H$. After choosing some appropriate coordinates $z_1+iz_2$,
$z_1\in\R$, $z_2>0$, on $\H$, we
will suppose that the group $G$ is equal to the group $G_a$ from Lemma
\ref{l3}. $f$, being $G$-invariant, depends thus on $z_2$ alone,
moreover
\be\label{f-H}f(a^2z_2)=f(z_2).\ee
At this moment, we make the change of variable 
$$z_2:=e^y$$
and set $a_0:=\ln a^2$. We re-write (\ref{f-H}):
\be\label{fy} f(y+a_0)=f(y),\ \forall y\in\R.\ee

As before, the coefficients of the $1$--form $\a$ must depend on $z_2$
alone and, using the variable $y$, the required $1$--from $\a$ has to
be $\a=\b(y)e^{-y}dz_1$, with $\b$ a function, and the $G$-invariance
of $\a$ is equivalent to 
\be\label{ay} \b(y+a_0)=\b(y),\ \forall y\in\R.\ee
The differential equation (\ref{dalf}) is equivalent to
\be\label{eqq} \b'-\b=c-f.\ee
This equation is an affine differential equation that has global
solutions on $\R$. If $\b_1,\b_2$ are such solutions, their difference
satifies the associated linear equation, hence there is a constant
$p\in\R$ such that 
$$\b_1(y)-\b_2(y)=p e^y.$$
Because $(c-f)$ is $a_0$--periodic, for every solution $\b$ of
(\ref{eqq}), $y\mapsto \b(y+a_0)$ satisfies (\ref{eqq}) as well, hence
$$\b(y+a_0)-\b(y)=p e^y,$$
for some $p\in\R$. Therefore,
$$\b_0(y):=\b(y)+\frac{p}{e^{a_0}-1}e^y$$
satisfies (\ref{eqq}) and is also $a_0$--periodic, as required. This
proves the claim for $S=\H$ and $\dim G=1$, where we note that, unlike
in the other cases, $c$ can be arbitrarily chosen.
\end{proof}
The consequence of this Lemma is that the contact form $\l_0:=\l+\a$,
together with the pre-Sasakian structure on $N$ determines another
$G$-invariant Sasakian structure on $N$, for which the automorphism
group is maximal ($4$--dimensional; in particular $(N,\xi,J,\l_0)$ is
homogeneous).

As the space of hermitian metrics is convex, the passage from $\o$ to
$c\o_0$ can be made smoothly, therefore the Sasakian structure on $N$
is isotopic (through $G$--invariant Sasakian structures, and also,
without changing the pre-Sasakian structure) to the
Sasakian structure corresponding to a metric of constant
curvature. This result is also true for $N\simeq S^3$, but there
an additional deformation step is involved, that keeps the $CR$ structure
but not the pre-Sasakian structure.

The Sasakian structures corresponding to a principal bundle over a
Riemann surface of constant curvature $\k$ are left-invariant Sasakian
structures on the Lie groups $\tsl $ (for $\k<0$), $Nil^3$ (for
$\k=0$) and $SU(2)$ (for $\k>0$) \cite{lck}, \cite{cr}, \cite{s3}.

The Sasakian automorphisms of these homogeneous Sasakian manifolds are
$4$--dimensional, and are equal to\bi
\item $\tsl\x\R/\Z$ for $\k<0$; here, the factor $\R$ is the Reeb
  flow, which at some times produces the central elements of $\tsl$,
  hence the quotient by $\Z$;
\item $Nil^3\rtimes S^1$ for $\k=0$; here, the factor $S^1$ comes
  from the rotations in $\C$, the space of the Reeb orbits; $S^1$ acts
  on the Heisenberg group $Nil^3$ by rotations on the contact plane;
\item $SU(2)\x S^1/\{\pm 1\}$ for $k>0$, where $S^1$ is the Reeb flow, and, as
  in the case $k<0$, the center $\{\pm 1\}$ of $SU(2)$ is common to it
  and the Reeb flow.\ei
\end{proof}
We have shown that the hermitian structure of a generic compact GCE
manifold of dimension $6$ (complex dimension $3$) can be deformed to a
locally homogeneous one. We also know that the universal covering $\tl
M$ is a
product of two of the three standard simply connected, complete,
homogeneous Sasakian $3$-manifolds (the Lie groups $\tsl, Nil^3$ and
$SU(2)$). The topological structure of the compact quotient $M$
depends, thus, on the group $\pi_1(M)=\G$. This group is not
necessarily a product of latices in the two factors, as the following
example shows:

{\noindent\bf Example. } Let $\c_{1,2}:\C\x \C\ra\C\x \C$ be isomorphisms of
  $\C\x\C$:
$$\c_1(z,w):=(z+1,qw),\quad \c_2(z,w):=(z+i,w+1).$$
The projection of $\c_{1,2}$ on the isometry groups of the factors is
discrete and cocompact, provided that $q$ is a root of unity of order
$3,4$ or $6$. If we consider some lifts 
$$\tl\c_{1,2}:Nil^3\x
Nil^3\ra Nil^3\x Nil^3$$
of $\c_{1,2}$ as $\xi_{1,2}$-preserving isometries of the Sasakian
product $\tl M:=Nil^3\x Nil^3$ (which can be done as in Lemma \ref{l1}), then
they generate a discrete group of GCE isometries $\G$ on $\tl M$. 

The action of $\c_{1,2}$ generates a cocompact group acting on
$\C\x\C$, because 
$$a(z,w):=\c_1\c_2\c_1^{-1}\c_2^{-1}(z,w)=(z,w+q-1)$$
and 
$$b(z,w):=\c_1^2\c_2\c_1^{-2}\c_2^{-1}(z,w)=(z,w+q^2-1),$$
thus every point $(z,w)\in\C\x\C$ can be brought, first by $c_1$ and
$c_2$, to a point $(z_0,w')$, with $|z_0|<2$, then, using $a$ and $b$,
we get to a point $(z_0,w_0)$, with $|w_0|<2$ as well.

On the other hand, it can be shown that the group $\G$ contains some
Reeb translations, such that the action of $\G$ on $\tl M$ is
cocompact as well.

This latter claim follows from
\begin{lem}\label{comC} The commutator of two translation lifts
  $\tl\t_V,\tl\t_W$ in the Sasakian automorphism group of $N\ra\C$ is zero iff $V,W$ are linearly dependent. More
  precisely, if the metric on $\C$ is flat (hence $N\simeq Nil^3$),
  the commutator of $\tl\t_V,\tl\t_W$ is a Reeb translation by a
  shift that is proportional with the area of the parallelogram
  generated by $V$ and $W$. 
\end{lem}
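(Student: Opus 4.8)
The plan is to work directly with the principal bundle $N\to\C$, realizing the Sasakian structure via a connection form $\l = dt + \l_0$ where $\l_0\in\L^1\C$ satisfies $d\l_0 = -2\o$, and $\o = \o_0$ is the (flat) Kähler form on $\C$ in the case of interest. For a translation $\t_V$ of $\C$, Lemma \ref{isom} gives a Sasakian lift $\tl\t_V(t,z) = (t + f_V(z),\, z+V)$, where $f_V\colon\C\to\R$ is determined up to an additive constant by $df_V = \l_0 - \t_V^*\l_0$. First I would compute the commutator $\tl\t_V\tl\t_W\tl\t_V^{-1}\tl\t_W^{-1}$ explicitly: since the projections to $\C$ commute (all are translations), the commutator projects to the identity on $\C$, so it is a pure Reeb translation $(t,z)\mapsto(t+c(V,W),z)$ by a constant $c(V,W)\in\R$. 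Chasing the definitions, $c(V,W)$ is the value of the (closed, hence exact on $\C$) $1$-form obtained by the appropriate alternating sum of the $\t^*\l_0$'s, integrated along any path; concretely $c(V,W) = f_V(z) + f_W(z+V) - f_V(z+W) - f_W(z)$, which is independent of $z$.

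Next I would identify this constant with a flux integral. Using $df_V = \l_0 - \t_V^*\l_0$ and the additivity of the construction, the expression $f_V(z) + f_W(z+V) - f_V(z+W) - f_W(z)$ telescopes into the integral of $d\l_0 = -2\o_0$ over the parallelogram $P$ spanned by $V$ and $W$ based at $z$ — this is just Stokes' theorem applied to the boundary loop formed by the four translated edges. Hence $c(V,W) = -2\int_P \o_0 = -2\,\mathrm{Area}(P)$ (up to the normalization of $\o_0$), which is proportional to the area of the parallelogram generated by $V$ and $W$. This area vanishes precisely when $V$ and $W$ are linearly dependent over $\R$, giving both assertions of the lemma at once; for general (non-flat) metrics on $\C$ the same Stokes argument shows the commutator is the Reeb translation by $-2\int_P\o$, which is still nonzero when $P$ is a genuine (non-degenerate) parallelogram, since $\o$ is a positive area form.

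The only genuine subtlety — and the step I expect to require the most care — is bookkeeping the additive ambiguities in the $f_V$: each lift $\tl\t_V$ is only well defined up to a Reeb translation (adding a constant to $f_V$), but this ambiguity manifestly cancels in the commutator, so $c(V,W)$ is canonically defined; one must also check that the alternating sum of the pulled-back $1$-forms is genuinely the exterior derivative of the asserted function of $z$, i.e. that no monodromy obstructs solving for the $f_V$ on the simply connected base $\C$, which is exactly Lemma \ref{isom}. Once these two points are nailed down, the identification of $c(V,W)$ with the area flux is a direct application of Stokes' theorem and the lemma follows.
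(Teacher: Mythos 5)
Your proof is correct and rests on the same key idea as the paper's: the commutator is a Reeb translation by the holonomy of the contact form around the parallelogram $P$ spanned by $V$ and $W$, which Stokes' theorem identifies with $\pm 2\int_P\o$, nonzero exactly when $V,W$ are independent. The paper phrases this via horizontal lifts of the translation flows and a closed-horizontal-curve contradiction, while your explicit telescoping of the $f_V$'s in the trivialization $N\simeq\R\times\C$ is a tidier, fully computational rendering of the same argument (and handles the additive ambiguity of the lifts cleanly, where the paper only sketches the non-closing case).
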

\begin{proof}
Note that every vector field $X$ on $\C$ has a unique horizontal lift
to $N$: it is the vector field $\tl X\in H$ (where $H$ is the
orthogonal space to the Reeb field) that projects on $X\in T\C$. The flow of a translation $\t_V$ can therefore be lifted to the
flow $\phi^V$ of a horizontal vector field $\tl V\in H_i$. Note that this flow
$\phi^V$ does not preserve the contact structure on $M_i$ (in fact
$\L_{\tl V}\l=-2\tl{JV}\ne 0$), but
it preserves the pre-Sasakian structure. However, if $\t_V\in
\mbox{Isom}(\o)$, Lemma \ref{l1} implies that there exists a lift
$\tl\t_V$ acting on $N$ by Sasaian automorphisms, i.e., in particular,
$\tl\t_V^*\l=\l$. We can thus assume (after possibly composing
$\tl\t_V$ with some element in $\Phi$, the group of Reeb flows) that
$\tl\t_V(x_0)=\phi^{\tl V}_1(x_0)$, for some (hence for all) $x_0\in
p_i^{-1}(0)$. Here $\phi^{\tl V}_t$ is the flow of the vector feld $\tl V$ at time
$t$. 

We can use the replacement $\tl\t_V$, resp. $\tl\t_W$ (defined
such that $\tl\t_W(x_0)=\phi^{\tl
  W}_1(x_0)$, $\forall x_0\in p_i^{-1}(V)$), because 
the composition with a central element in $\Phi_i$ does not change the
commutator of two elements.

It follows that
$$\tl\t_W\circ\tl\t_V(x_0)=\phi^{tl W}_1\circ\phi^{\tl V}_1(x_0),$$
and we want to determine
$(\tl\t_V)^{-1}\circ\tl\t_W\circ\tl\t_V(x_0)$. Note that all elements
of $G_i$ that project on translations of $\C$ (and thus commute with
the constant vector fields $V$ and $W$) preserve the lifted
vector fields $\tl V$, $\tl W$. Therefore,
$(\tl\t_V)^{-1}$ sends the integral curve of $\tl W$ containing
$\tl\t_V(x_0)$ to the integral curve of $\tl W$ through $x_0$. We
conclude that 
$$(\tl\t_V)^{-1}\circ\tl\t_W\circ\tl\t_V(x_0)=\phi^{\tl
  W}_{-1}(x_0).$$
We suppose now that $\tl\t_W\circ\tl\t_V=\tl\t_v\circ\tl\t_w$. This
implies that $\tl\t_W\circ\tl\t_V(x_0)$ and
$(\tl\t_V)^{-1}\circ\tl\t_W\circ\tl\t_V(x_0)$ are connected by an
integral curve of $\tl V$, the image through $(\tl\t_W)^{-1}$ of the
integral curve of $\tl V$ through $x_0$.

This means that the parallelogram $P$ in $\C$ through $0,V,W,V+W$ lifts to
a closed horzontal curve $C$ in $M_i$, thus there is a local section $\s$
of $M_i\ra \C$ such that $\s(\d P)=C$. However, by Stokes' Theorem, we
have
$$\int_C\l=\int_{\d P}\s^*\l=\int_P\s^*d\l.$$
The left hand side is zero because $\l$ vanishes on the horizontal
curve $C$, and the right hand side is the area of $P$ for the metric
induced on $\C$ by the Sasakian metric on $M_i$, contradiction.

If the sequence of lifted segments does not close, we artificially
close it by adding a piece of Reeb orbit. Then, using Stokes yields
the desired result.
\end{proof}
 
We see that the groups acting freely, properly discontinuously and with
compact quotient on $G_1\x G_2$, for $G_i\in\{\tsl,Nil^3,SU(2)\}$ do
not necessarily preserve the leaves of the two orthogonal
foliations. Their general structure deserves further research.

\end{document}